\documentclass[letterpaper,reqno,11pt]{amsart}

\usepackage{preamble}
\usepackage{mathtools}

\newcommand{\bfemph}[1]{\textbf{\emph{#1}}}
\title{Binary X-rays of doubly stochastic matrices}
\author{Bangzheng Li \and Yuewei Liu \and Yuan Yao}
\thanks{Contact: \href{mailto:liben@mit.edu,llliliue@mit.edu,yyao1@mit.edu}{\{liben,llliliue,yyao1\}@mit.edu}}

\begin{document}

\begin{abstract}
    The X-ray of a permutation is a sequence of sums along each diagonal of the associated permutation matrix. They satisfy certain necessary constraints on distribution of the values, which are conjectured to be sufficient when the sequence is binary. By re-expressing the constraints in a form that allows for real-valued relaxations, we prove that these binary sequences are always X-rays of doubly stochastic matrices. We also disprove a natural generalization of the conjecture.
\end{abstract}

\maketitle

\section{Introduction}

We begin with an elementary question:

\begin{question}\label{question}
    Which vectors in $\mathbb{Z}^n$ can be written as the sum of two permutations $\sigma$ and $\tau$ of $[n] = \{1, 2, \dots, n\}$? (Here, each permutation is written as a vector in one-line notation.)
\end{question}

Observe that the question remains equivalent if we replace ``sum'' by ``difference'' (as $v$ can be written as a sum if and only if $v - (n+1, n+1, \dots, n+1)$ can be written as a difference). Moreover, by permuting the entries of the vector if necessary, we can always assume that $\tau$ is the identity permutation. As a result, this problem reduces to one about finding permutation matrices with prescribed sums along each diagonal:

\begin{definition}
    Given a permutation $\sigma\in S_n$, define its \bfemph{S-vector} $s(\sigma)$ to be $(\sigma(1)-1, \dots, \sigma(n)-n)\in \mathbb{Z}^{n}$ and its \bfemph{sorted S-vector} $\bar{s}(\sigma)$ to be the entries of $s(\sigma)$ sorted in non-decreasing order.
    
    Given an $n\times n$ matrix $A$ of real numbers, define its \bfemph{(diagonal) X-ray}\footnote{Also called \emph{Toeplitz X-ray} in \cite{BF2014}.} $x(A)$ to be a vector $(x_{-n+1}, \dots, x_0, \dots, x_{n-1})\in \mathbb{R}^{2n-1}$ such that $x_d = \sum_{\substack{i, j\in [n]\\ i-j=d}} a_{i,j}$ for $d \in [-n+1, n-1]$. 
    
    When $A = A_\sigma$ is a permutation matrix for $\sigma\in S_n$, we let $x(\sigma) = x(A_\sigma)\in \mathbb{N}^{2n-1}$ be the \bfemph{X-ray of permutation $\sigma$}. Note that $x(\sigma)$ is also the \bfemph{frequency vector} of $s(\sigma)$, where $x_d$ counts the number of occurrences of $d$ in $s(\sigma)$ for each $d$, and encodes the same information as $\bar{s}(\sigma)$.
\end{definition}

In addition to being an interesting object on its own, sums/differences/X-rays of permutations have been studied in relation to discrete tomography (\cite{HK1999}), and can also be seen as a special case of Skolem sequences from combinatorial design theory (\cite{CD2006}) and related to a special case of the PRV conjecture in representation theory (\cite{PRV1967}). 

Some easy necessary conditions for S-vectors are known:

\begin{proposition}[\cite{BMPS2005}, Proposition 4.1]
    The S-vector $s = (s_1, \dots, s_n)$ of a permutation satisfies $\left|\sum_{i\in I} s_i\right| \leq |I|(n-|I|)$ for all $I\subseteq [n]$.
\end{proposition}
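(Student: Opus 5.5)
We need $\left|\sum_{i\in I}(\sigma(i)-i)\right|\le |I|(n-|I|)$ for every $I\subseteq[n]$. The plan is to write $\sum_{i\in I} s_i = \sum_{i\in I}\sigma(i) - \sum_{i\in I} i$ and to bound each of the two sums separately by extremal sums of $k=|I|$ distinct elements of $[n]$.

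Let $k=|I|$. Since $\sigma$ is a bijection, $\sigma(I)$ is a $k$-element subset of $[n]$. Hence
\[
\sum_{i\in I}\sigma(i) \le n+(n-1)+\dots+(n-k+1) = kn-\binom{k}{2}.
\]
Likewise $I$ itself is a $k$-element subset, so
\[
\sum_{i\in I} i \ge 1+2+\dots+k = \binom{k+1}{2}.
\]
Subtracting these gives
\[
\sum_{i\in I} s_i \le kn - \binom{k}{2}-\binom{k+1}{2} = kn - k^2 = k(n-k).
\]

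For the lower bound we swap the roles of the two sums: bound $\sum_{i\in I}\sigma(i)$ from below by $\binom{k+1}{2}$ and $\sum_{i\in I} i$ from above by $kn-\binom{k}{2}$. This yields $\sum_{i\in I}s_i\ge -k(n-k)$. A quicker route is to apply the upper bound to $\sigma^{-1}$ on the set $\sigma(I)$, since $\sum_{j\in\sigma(I)}(\sigma^{-1}(j)-j) = -\sum_{i\in I}s_i$. Together the two bounds give the claimed absolute-value inequality.

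There is no real obstacle here. The only point needing care is the identity $\binom{k}{2}+\binom{k+1}{2}=k^2$, which turns the difference of the extremal sums into exactly $k(n-k)$.
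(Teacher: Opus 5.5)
Your proof is correct and complete. The extremal bounds $\sum_{i\in I}\sigma(i)\le kn-\binom{k}{2}$ and $\sum_{i\in I} i\ge\binom{k+1}{2}$ are right, and the identity $\binom{k}{2}+\binom{k+1}{2}=k^2$ gives exactly $k(n-k)$. The $\sigma^{-1}$ trick for the lower bound is also valid, because $|\sigma(I)|=k$ and $\sum_{j\in\sigma(I)}(\sigma^{-1}(j)-j)=-\sum_{i\in I}s_i$.

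The paper does not prove this proposition; it quotes it from \cite{BMPS2005}, so there is no internal proof to compare with. Your argument is the standard one. It is the support-function computation behind the paper's remark that these inequalities cut out a translated copy of $2\Pi_n$: you are showing $s\in\Pi_n-\Pi_n=2\Pi_n-(n+1)\mathbf{1}$.

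An equally short alternative uses the paper's tournament language. One has $s_i=\#\{j>i:\sigma(j)<\sigma(i)\}-\#\{j<i:\sigma(j)>\sigma(i)\}$. So $s$ is the score vector of the tournament in which each inversion is a win for the smaller index and every other game is a draw. Games inside $I$ cancel in $\sum_{i\in I}s_i$. What remains is $|I|(n-|I|)$ cross games, each worth $-1$, $0$, or $1$.
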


\begin{remark}
    If we take $I = [n]$ in the above inequalities, we get an obvious equality $s_1 + \dots + s_n = 0$. This set of inequalities will be called the \bfemph{permutohedral constraints}, as they define a polytope in $\mathbb{R}^n$ that is a translated copy of a dilated permutohedron $2\Pi_n$. 
    
    The set of all integer vectors satisfying the permutohedral constraints is also the set of all possible score sequences of a round-robin tournament with $n$ players, where the score for each player is $(\#\text{ Wins}) - (\#\text{ Losses})$ but draws (worth 0 points) are possible (also see OEIS sequence A007747\footnote{The OEIS sequence counts \emph{sorted} score vectors, but the constraints are essentially equivalent.}). Hence we refer to these sequences as \bfemph{score vectors}.
\end{remark}

Frequency vectors of score vectors are not always realized as the X-ray of a permutation; we will verify in \cref{ex:counter_example} that $(-2, -2, -2, 2, 2, 2)$ is one such counter-example. Worse yet, it is known that determining whether such vectors are X-rays of a permutation, or equivalently the original \cref{question}, is NP-complete (\cite{BDGV2008}, Theorem 2), meaning that one generally do not expect there to be a small set of conditions that characterize all possible X-rays.
However, it is conjectured that all \emph{binary} frequency vectors can be realized.

\begin{conjecture}[\cite{BMPS2005}, Conjecture 4.2]\label{conj:binary}
    If a score vector contains no duplicate values, then its frequency vector can always be realized as the X-ray of a permutation.
\end{conjecture}

\begin{remark}
    It has been noted in \cite{BMPS2005} that sorted score vectors without duplicate values are also in bijection with sorted score sequences of round-robin tournaments with no draws (see OEIS sequence A000571).
\end{remark}

In this paper we resolve a weaker form of this conjecture. Recall that a \bfemph{doubly stochastic (DS) matrix} is a non-negative real matrix whose rows and columns all sum to $1$. In particular, the permutation matrices are the DS matrices with integer entries.

\begin{theorem}[Resolving \cite{Nordh2017}, Conjecture 15]\label{thm:main}
    If a score vector contains no duplicate values, then its frequency vector can always be realized as the X-ray of a doubly stochastic matrix.
\end{theorem}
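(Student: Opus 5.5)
The plan is to use linear programming duality rather than an explicit construction. Let $D=\{d_1<\dots<d_n\}$ be the (distinct) entries of the score vector, and let $x\in\{0,1\}^{2n-1}$ be its frequency vector, so $x_d=1$ if and only if $d\in D$. By definition, $x$ is the X-ray of a DS matrix exactly when the following polytope is nonempty: the set of $A\ge 0$ with all row and column sums $1$, $a_{i,j}=0$ whenever $i-j\notin D$, and $\sum_{i-j=d}a_{i,j}=1$ for each $d\in D$. This is a transportation problem with extra side constraints. By Farkas' lemma, infeasibility is witnessed by real potentials $u_i$ (rows), $v_j$ (columns) and $w_d$ (diagonals) such that
\[
u_i+v_j+w_{i-j}\ge 0 \quad\text{whenever } i-j\in D,
\]
while $\sum_i u_i+\sum_j v_j+\sum_{d\in D} w_d<0$. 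The goal is to show that no such witness exists, using only the permutohedral constraints. Because $x$ is binary, these constraints read $\sum_{i\le k} d_i\ge -k(n-k)$ for every $k$, with equality at $k=n$.

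The first step is to re-express the constraints so that they survive relaxation to real data. I would rewrite them as a majorization-type statement. With the $d_i$ sorted increasingly, the condition says that $(d_1,\dots,d_n)$ is majorized by $(-(n-1),-(n-3),\dots,n-3,n-1)$, the sorted S-vector of the reversal permutation. Equivalently, for every threshold $t$,
\[
\sum_{d\in D}(t-d)^+\le \sum_{i}(t-s_i)^+,
\]
where $s$ is that extremal vector, and the corresponding inequality holds for $(d-t)^+$. This convex-function form of the constraint is what can be paired against a dual witness.

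The second step is to simplify the witness. Minimizing over $u$ and $v$ for fixed $w$, feasibility becomes the claim that a min-cost perfect matching exists, namely that for every function $w$ on $D$,
\[
\min_{\sigma\in S_n}\sum_i w\bigl(\sigma(i)-i\bigr)\ \le\ \sum_{d\in D} w(d),
\]
where $w(d)=+\infty$ for $d\notin D$. I would first reduce to "threshold" functions, taking $w$ to be an extreme ray of the dual cone. The hope is that these extreme rays are, up to the lineality given by affine functions of $d$ (which both sides treat equally, since $\sum_d d=0=\sum_i(\sigma(i)-i)$), essentially the convex hinge functions $(t-d)^+$ and $(d-t)^+$ together with the support indicators. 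For a hinge function, the reversal permutation, or a suitable partial reversal on a block of rows and columns, should attain the left-hand minimum. Its value is then exactly $\sum_i(t-s_i)^+$, and the majorization inequality closes the argument.

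The main obstacle is this reduction. Because of the support restriction to diagonals in $D$, the extreme dual functions $w$ are not obviously convex, and the minimizing permutation must use only allowed diagonals. If the reduction fails, the fallback is an induction on $n$ over the relaxed, real-valued class: vectors $x$ with $0\le x_d\le 1$, $\sum_d x_d=n$, and the continuous majorization constraints above. The inductive step would strip the outermost diagonals, namely the corner entries $a_{n,1}$ and $a_{1,n}$, or delete a row and a column. One then checks that the residual fractional X-ray on $n-1$ still satisfies the relaxed constraints. Verifying that preservation step is where the real work would lie.
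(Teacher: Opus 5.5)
The setup is sound. Farkas plus assignment duality does give the criterion $\min_\sigma\sum_i w(\sigma(i)-i)\le\sum_{d\in D}w(d)$, and your hinge/majorization form of the score condition is exactly the paper's $D_k,U_k$: these are the hinge inequalities at the breakpoints $t=2k+1-n$. The problem is that the step carrying all the weight, reducing an arbitrary certificate $w$ to hinge functions, is only a hope, and the mechanism you sketch for the hinge case runs the wrong way.

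Your criterion needs, for each $w$, \emph{some} permutation with $\sum_i w(\sigma(i)-i)\le\sum_{d\in D}w(d)$. For a convex hinge $w(d)=(t-d)^+$, the reversal permutation \emph{maximizes} $\sum_i w(\sigma(i)-i)$ over all permutations, since every S-vector is majorized by $s$. The majorization you established gives $\sum_{d\in D}(t-d)^+\le\sum_i(t-s_i)^+$, which is the opposite of what is needed. For example, with $n=3$, $D=\{-1,0,1\}$ and $t=0$, the reversal gives $2$ against $\sum_{d\in D}w(d)=1$.

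Moreover, that reversal uses the diagonals $\pm2\notin D$, so under your convention $w=+\infty$ off $D$ it is not even an admissible competitor. That convention also turns $w\equiv 0$ into a genuine existence statement: it asks for a permutation all of whose displacements lie in $D$.

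The certificates that actually encode the score condition are the concave ones $w=-(t-d)^+$. The reversal certifies them only when $w$ is finite on all $2n-1$ diagonals, with $x_d=0$ off $D$. That is the paper's setting, where $D_k$ is $I(w)$ for $w=(k,k-1,\dots,1,0,\dots,0)$.

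Even with that repaired, ``the extreme certificates are hinges plus indicators'' is not the right statement. The facets of the DS X-ray polytope come from rays of a tropical hyperplane arrangement with no known simple description. For instance, with $n=6$ the valid inequality $x_{-2}+x_2\le 4$ is violated by the frequency vector of $(-2,-2,-2,2,2,2)$, even though that vector satisfies every hinge inequality. So binarity has to enter the reduction as a property of the point, not through a description of extreme rays.

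What is true, and what the paper proves, is that each certificate inequality $I(w)$ is a nonnegative combination of $D_k$, $U_k$, $\sum_d x_d=n$ and the bounds $0\le x_d\le 1$. The paper does this by explicit peeling:
\begin{itemize}
\item Subtract $(\min_i w_i)\mathbf{1}$. This is harmless, since $I(\mathbf{1})$ follows from $D_{n-1}+U_{n-1}$.
\item Split $w$ at a zero entry into a left piece and a right piece. This works because $v_d(w)$ is at most the minimum over pairs lying on one side of that zero.
\item For a one-sided $w$ with $w_{n-1}=0$, subtract $\alpha(n-1,n-2,\dots,1,0)$, which is $\alpha D_{n-1}$, with $\alpha=\min_m w_{n-1-m}/m$. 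This creates a new zero at index $n-1-m$.
\item Apply induction to the indices $0,\dots,n-1-m$. Bound each remaining diagonal coefficient by a single entry $w'_{d+m}$ or $w'_d$; this is exactly where $x_d\le 1$ is used.
\end{itemize}
Your fallback, stripping corners or a row and column and checking that the relaxed constraints persist, is too unspecified to fill this gap. Note that the paper never constructs a matrix at all.
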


The paper is organized as follows: In \cref{sec:lin_freq} and \cref{sec:lin_xray} we determine a set of linear constraints for a vector to be the frequency vector of a score vector, and one for a vector to be the X-ray of a doubly stochastic matrix. In \cref{sec:equiv} we show that these two systems of inequalities are equivalent when the vector is binary. In \cref{sec:sat_conj} we discuss a potential direction for fully resolving \cref{conj:binary} and some other related objects.

\section{Linear constraints on frequency vectors} \label{sec:lin_freq}

In order to work with X-rays for real-valued matrices, we need to first determine how the permutohedral constraints (in $\mathbb{R}^n$) translate to the frequency space (in $\mathbb{R}^{2n-1}$). 

\begin{proposition}\label{prop:x-conditions}
    An integer vector $s\in \{-n+1, \dots, n-1\}^n$ satisfies the permutohedral constraints (i.e.\ is a score vector) if and only if its corresponding frequency vector $x\in \mathbb{N}^{2n-1}$ satisfies the equation $S: \sum_{d=-n+1}^{n-1} x_d = n$ and the following inequalities for all integers $k = 0, \dots, n$: 
        \[D_k: \sum_{i=1}^{2k} (2k-i+1) x_{-n+i} \leq k(k+1) \quad \text{and} \quad U_k: \sum_{i=1}^{2k} (2k-i+1) x_{n-i}\leq k(k+1).\]
    (Here we let $x_{-n} = x_n = 0$.)
\end{proposition}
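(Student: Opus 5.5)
The plan is to rewrite each family of inequalities $D_k$ (and symmetrically $U_k$) as a pointwise comparison of two piecewise-linear convex functions of a real parameter $c$, and then use Legendre duality to match it with the permutohedral constraints. The equation $S$ just says the frequency vector counts $n$ entries, so assume it throughout.

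\emph{Reformulating the permutohedral constraints.} Sort $s$ as $s_{(1)}\le\dots\le s_{(n)}$ and let $L_m=\sum_{j\le m}s_{(j)}$ be the sum of the $m$ smallest entries. Since the extreme choices of $I$ with $|I|=m$ are the $m$ smallest and the $m$ largest entries, the permutohedral constraints are equivalent to:
\begin{itemize}
\item $L_m\ge -m(n-m)$ for all $m$ (lower half);
\item the analogous bound for the $m$ largest entries (upper half);
\item $L_n=0$.
\end{itemize}
The upper half is the lower half applied to $-s$. Negating $s$ reverses the frequency vector, which exchanges $D_k$ and $U_k$. So it suffices to show that $D_0,\dots,D_n$ are equivalent to the lower half together with $\sum s_j\ge 0$. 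The reverse inequality $\sum s_j\le 0$ then comes from the $U$ side.

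\emph{Translating $D_k$.} Reindex with $d=-n+i$:
\[
\sum_{i=1}^{2k}(2k-i+1)x_{-n+i}=\sum_{d}(c_k-d)^+x_d=\sum_j (c_k-s_j)^+ ,\qquad c_k:=2k+1-n .
\]
Define $f(c)=\sum_j(c-s_j)^+$. A standard identity gives $f(c)=\max_{0\le m\le n}(mc-L_m)$. Since the increments $L_m-L_{m-1}=s_{(m)}$ are nondecreasing, $m\mapsto L_m$ is convex. Hence conjugacy gives back
\[
L_m=\max_c\,(mc-f(c)).
\]
Now set $g(c)=\max_m(mc+m(n-m))$. This is the same kind of conjugate, built from the convex sequence $-m(n-m)=m^2-mn$, so likewise $-m(n-m)=\max_c(mc-g(c))$.

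It follows that the family $L_m\ge -m(n-m)$ is equivalent to $f\le g$ on all of $\mathbb{R}$:
\begin{itemize}
\item If $f\le g$, then $L_m=\max_c(mc-f)\ge\max_c(mc-g)=-m(n-m)$.
\item Conversely, if $L_m\ge -m(n-m)$ for every $m$, then each linear piece $mc-L_m$ of $f$ lies below the corresponding piece of $g$, so $f\le g$.
\end{itemize}

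\emph{Reducing to finitely many $c$.} Consecutive linear pieces $m$ and $m+1$ of $g$ cross exactly at $c=2m+1-n=c_m$. There $g(c_k)=k(2k+1-n)+k(n-k)=k(k+1)$, so $D_k$ says precisely $f(c_k)\le g(c_k)$. Between consecutive breakpoints $g$ is linear and $f$ is convex, so $f\le g$ on $[c_0,c_n]$ iff it holds at the $c_k$.

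It remains to handle $c$ outside $[c_0,c_n]$:
\begin{itemize}
\item For $c\le c_0=1-n$, we have $f(c)=0$ because every $s_j\ge 1-n$, and $g\ge 0$ (take $m=0$).
\item For $c\ge c_n=n+1$, we have $f(c)=nc-\sum s_j$ because every $s_j\le n-1$, while $g(c)=nc$. So $f\le g$ there iff $\sum s_j\ge 0$, which is exactly $D_n$.
\end{itemize}
Note also that $D_0$ is vacuous. Thus $\{D_k\}_{k=0}^n$ is equivalent to the lower half of the constraints plus $\sum s_j\ge0$, and symmetrically $\{U_k\}$ is equivalent to the upper half plus $\sum s_j\le 0$.

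I expect the main obstacle to be the bookkeeping in the duality step. One must check that the conjugate of the convex sequence $L_m$ is recovered on $m\in\{0,\dots,n\}$, and that $g$'s breakpoints land exactly on the $c_k$, including the boundary cases $k=0$ and $k=n$ where $x_{\pm n}=0$ enters. I would verify the identity $f(c)=\max_m(mc-L_m)$ directly by choosing $m=\#\{j: s_j<c\}$, and then argue both directions of the equivalence elementarily rather than citing convex duality.
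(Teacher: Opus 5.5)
Your proof is correct, and it takes a genuinely different route from the paper's.

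\textbf{The paper's route.} The paper proves the two directions separately.
\begin{itemize}
\item For necessity, it realizes $s$ as the score vector of a tournament with draws, a realizability fact it takes for granted. In the totally ordered tournament, $x=(1,0,1,\dots,0,1)$ and every $D_k$, $U_k$ is tight. Every other frequency vector is reached from this one by moves that replace one unit each of $x_i,x_j$ (with $j-i\ge 2$) by one unit each of $x_{i+1},x_{j-1}$, and such moves never break the inequalities.
\item For sufficiency, it rewrites the permutohedral constraints as the quadratic inequalities $\bigl(\sum_{i\le t}x_{-n+i}\bigr)^2\le\sum_{i\le t}ix_{-n+i}$. It derives these from $D_k$ with $k=\sum_{i\le t}x_{-n+i}$, and handles index sets that split a block of equal entries by convexity in $x_{-n+t}$.
\end{itemize}

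\textbf{Your route.} You identify $D_k$ with $f(c_k)\le g(c_k)$, where $f$ and $g$ are the discrete conjugates of $m\mapsto L_m$ and $m\mapsto -m(n-m)$. This treats both directions at once, avoids both the tournament realization and the block-splitting, and is fully elementary.

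\textbf{A shortening.} Your argument can be cut down further. Since $g(c_m)=mc_m+m(n-m)=m(m+1)$, the trivial bound $f(c)\ge mc-L_m$ evaluated at $c=c_m$ gives
$L_m\ge mc_m-m(m+1)=-m(n-m)$ directly from $D_m$. So the convexity interpolation between breakpoints and the analysis outside $[c_0,c_n]$ are not needed. Conversely,
$f(c_k)=\max_m(mc_k-L_m)\le\max_m m(2k+1-m)=k(k+1)$.

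\textbf{What the paper's route gives instead.} It provides the combinatorial picture of $(1,0,1,\dots,0,1)$ as the extremal point at which all constraints are tight, with every frequency vector obtained from it by inward moves.
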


\begin{example}
    When $n = 4$, the constraints in the statement of \cref{prop:x-conditions} are 
    \begin{alignat*}{2}
        &S: & x_{-3} + x_{-2} + x_{-1} + x_0 + x_1 + x_2 + x_3 &= 4 \\
        &D_1: & 2x_{-3} + x_{-2} &\leq 2 \\
        &D_2: & 4x_{-3} + 3x_{-2} + 2x_{-1} + x_0 &\leq 6 \\
        &D_3: \quad & 6x_{-3} + 5x_{-2} + 4x_{-1} + 3x_0 + 2x_1 + x_2 &\leq 12 \\
        &U_1: & x_2 + 2x_3 &\leq 2 \\
        &U_2: & x_0 + 2x_1 + 3x_2 + 4x_3 &\leq 6 \\
        &U_3: & x_{-2} + 2x_{-1} + 3x_0 + 4x_1 + 5x_2 + 6x_3 &\leq 12.
    \end{alignat*}

    (Inequalities $D_0$ and $U_0$ are trivial ($0\leq 0$), while $D_4 = D_3 + 2S$ and $U_4 = U_3 + 2S$, so they are omitted from the list, although they will be used in the proof.)

    Intuitively, the inequalities $D_k$ and $U_k$ mean that the frequency vector should not be too concentrated towards either of the two ends. 
\end{example}

\begin{proof}[Proof of \cref{prop:x-conditions}]
    We first consider a score vector $s$ and a tournament with that score vector. If $s$ is a permutation of $(-n+1, -n+3, \dots, n-3, n-1)$, then the corresponding tournament is ``totally ordered'', where a higher-ranked player always wins over a lower-ranked player. In this case the corresponding frequency vector is $(1, 0, 1, \dots, 1, 0, 1)$, and it is not difficult to see that all inequalities in the statement are in fact equalities in this case. 
    
    Otherwise, it is not difficult to see that there exist two players $i$ and $j$ where $s_i \geq s_j$ but $i$ did not win over $j$. If we change the outcome of this match to be 1 point more in favor of $i$ (a draw into a win, or a loss into a draw), then the sum of squares of all scores strictly increases (since $(s_i+1)^2 + (s_j-1)^2 \geq s_i^2 + s_j^2 + 2$), so this process must terminate in finitely many steps, ending in a totally ordered tournament. By running this process in reverse, we see that any frequency vector of a score vector can be obtained via starting with $(1, 0, 1, \dots, 1, 0, 1)$ and repeatedly decrementing $x_i$ and $x_j$ by $1$ for some pair $(i,j)$ where $j - i \geq 2$ and $x_i, x_j\geq 1$ while incrementing $x_{i+1}$ and $x_{j-1}$ by $1$. It is clear that such operation never makes any of the listed inequalities false, so the final vector $x$ must also satisfy them.

    For the reverse direction, we first claim that the permutohedral constraints are equivalent to the following (quadratic) inequalities for all $t = 1, \dots, 2n-1$: \begin{align*}
        \sum_{i=1}^{t}(-n+i)x_{-n+i} &\geq -\left(\sum_{i=1}^{t}x_{-n+i}\right)\left(n-\sum_{i=1}^{t}x_{-n+i}\right), \\
        \sum_{i=1}^{t}(n-i)x_{n-i} &\leq \left(\sum_{i=1}^{t}x_{n-i}\right)\left(n-\sum_{i=1}^{t}x_{n-i}\right),
    \end{align*}
    or equivalently 
    \begin{equation}\label{eq2}
        \left(\sum_{i=1}^{t}x_{-n+i}\right)^2 \leq \sum_{i=1}^t ix_{-n+i}\quad \text{and}\quad \left(\sum_{i=1}^{t}x_{n-i}\right)^2 \leq \sum_{i=1}^t ix_{n-i}.
    \end{equation}
    These inequalities follow from the permutohedral constraints by letting $I$ be the set of all indices $i$ for which $s_i\leq -n+t$ (or $s_i\geq n-t$). 
    
    We can also show that these inequalities are sufficient. First, assume without loss of generality that the S-vector is sorted, so it suffices to prove the permutohedral constraints for $I = [k]$ (and $I = [n]\setminus [k]$) for each $k \in [n-1]$. Suppose that $I = [k]$ and $s_k = -n+t$, and that there are $x'\leq x_{-n+t}$ copies of $-n+t$ in the first $k$ coordinates. Since the first inequality in \eqref{eq2} is convex with respect to the variable $x_{-n+t}$, and it holds with $x_{-n+t}$ replaced by $0$ (which is the same as the inequality for $t-1$), it must also hold for with $x_{-n+t}$ replaced by $x'$ (since $0\leq x' \leq x_{-n+t}$). This is equivalent to the permutohedral constraint for $I = [k]$. The case for $I = [n]\setminus [k]$ is symmetric.

    Now, for each $t$ let $k = \sum_{i=1}^t x_{-n+i}$ (which is an integer in $[0, n]$). Since $(2k-i+1)x_{-n+i}$ is non-negative when $i\leq k$ and non-positive when $i > k$, by $D_k$ we have \[(2k+1)k-\sum_{i=1}^t ix_{-n+i} = \sum_{i=1}^{t} (2k-i+1) x_{-n+i} \leq \sum_{i=1}^{k} (2k-i+1) x_{-n+i}\leq k(k+1),\] which directly rearranges to the first inequality of \eqref{eq2}. The second inequality can be proved symmetrically.
\end{proof}

Obviously, the same conditions make sense when $x$ is a non-negative real-valued vector (even though such $x$ does not correspond to a score vector), so we obtain a polytope in $\mathbb{R}^{2n-1}$ whose integer points are frequency vectors of score vectors. We emphasize that this is not the same polytope as the one specified by the permutohedral constraints (which is in $\mathbb{R}^n$); the only connection that the authors are aware of is that it has the same number of integer points as the part of $2\Pi_n$ that lies in the region $s_1\leq s_2\leq \cdots \leq s_n$.

\section{Linear constraints on DS X-rays} \label{sec:lin_xray}

On the other end, there is a necessary and sufficient condition for when a vector is an X-ray of a doubly stochastic matrix using linear programming duality:

\begin{proposition}\label{prop:w-conditions}
    A non-negative vector $x\in \mathbb{R}^{2n-1}$ satisfying $\sum\limits_{d=-n+1}^{n-1} x_d = n$ is the X-ray of a doubly stochastic matrix if and only if for every non-negative real vector $w = (w_0, \dots, w_{n-1})$ the following inequality holds: \begin{equation}
        I(w):\quad \sum_{d=-n+1}^{n-1} v_d(w) x_d \leq 2\sum_{i=0}^{n-1} w_i,
    \end{equation} where $v_d(w) = \min_{\substack{0\leq i\leq j\leq n-1\\ i+j=d+(n-1)}} (w_i + w_j)$ for each $d$.
\end{proposition}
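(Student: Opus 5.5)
The plan is to apply linear programming duality (Farkas' lemma) to the transportation-type feasibility problem, and then use a symmetry of the diagonals to collapse the two families of dual variables (for rows and for columns) into the single vector $w$.

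\emph{Necessity.} Index the matrix $A=(a_{p,q})$ by $p,q\in[n]$, so that $a_{p,q}$ lies on diagonal $d=p-q$. Put $i=p-1$ and $j=n-q$; then $i+j=d+(n-1)$. If $A$ is doubly stochastic with X-ray $x$, then
\[\sum_d v_d(w)x_d=\sum_{p,q}v_{p-q}(w)\,a_{p,q}\le\sum_{p,q}(w_{p-1}+w_{n-q})\,a_{p,q}=2\sum_i w_i .\]
The inequality holds because $v_d(w)$ is the minimum of $w_i+w_j$ over the pairs $\{i,j\}$ arising from the cells on diagonal $d$; the order of $i$ and $j$ does not matter. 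The last equality uses that every row sum and every column sum of $A$ is $1$. Nonnegativity of $w$ is not needed here.

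\emph{Sufficiency via Farkas.} Consider the system in the variables $a_{p,q}\ge 0$:
\[\textstyle\sum_q a_{p,q}=1,\qquad \sum_p a_{p,q}=1,\qquad \sum_{p-q=d}a_{p,q}=x_d .\]
By Farkas' lemma, this system is feasible if and only if, for all real $\alpha_p$, $\beta_q$, $y_d$ satisfying $y_{p-q}\le \alpha_p+\beta_q$ for every cell, we have $\sum_d y_d x_d\le \sum_p\alpha_p+\sum_q\beta_q$. Since $x\ge 0$, the worst case is to take $y_d=\min_{p-q=d}(\alpha_p+\beta_q)$. So the criterion becomes
\[\sum_d x_d\min_{p-q=d}(\alpha_p+\beta_q)\le\sum_p\alpha_p+\sum_q\beta_q\quad\text{for all }\alpha,\beta\in\mathbb{R}^n .\]
Adding a constant $c$ to every $\alpha_p$ and every $\beta_q$ raises both sides by $2cn$, using $\sum_d x_d=n$. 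Hence we may assume $\alpha,\beta\ge 0$.

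\emph{Symmetrization (the key step).} The anti-transpose $(p,q)\mapsto(n+1-q,\,n+1-p)$ maps each diagonal $d$ to itself. Given $\alpha,\beta\ge 0$, define $w_i=\tfrac12\bigl(\alpha_{i+1}+\beta_{n-i}\bigr)\ge 0$. Then for the cell $(p,q)$ we get
\[w_{p-1}+w_{n-q}=\tfrac12(\alpha_p+\beta_q)+\tfrac12(\alpha_{n+1-q}+\beta_{n+1-p}).\]
This is the average of the values of $\alpha+\beta$ at $(p,q)$ and at its anti-transpose, which lies on the same diagonal. Therefore $v_d(w)\ge \min_{p-q=d}(\alpha_p+\beta_q)$. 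Also $2\sum_i w_i=\sum_p\alpha_p+\sum_q\beta_q$. So $I(w)$ implies the Farkas inequality for $(\alpha,\beta)$, and feasibility follows.

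The main point needing care is this last step: checking that the reindexing $i=p-1$, $j=n-q$ matches the definition of $v_d$ with $i\le j$ (it does, since the min over unordered pairs is symmetric), and that averaging only increases the diagonal minima. Everything else is standard LP duality, which applies because the feasibility problem is a finite linear system.
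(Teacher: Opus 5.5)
Your proof is correct and follows essentially the same route as the paper: LP duality for the transportation-type system, followed by the symmetrization $w_i=\tfrac12(\alpha_{i+1}+\beta_{n-i})$, which is exactly the paper's choice $w_i=\tfrac12(r_{i+1}+c_{n-i})$. The differences are minor. You dualize the doubly stochastic equality system and use $\sum_d x_d=n$ to shift the free multipliers to be nonnegative, whereas the paper dualizes the sub-stochastic inequality system so its multipliers are nonnegative automatically. You also spell out, via the anti-transpose, the averaging argument behind $\min_{p-q=d}(\alpha_p+\beta_q)\le v_d(w)$, which the paper only asserts.
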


\begin{example}\label{ex:fund_weights}
    When $n\geq 3$ and $w = (2, 1, 0, \dots, 0)$, we see that $v(w) = (4, 3, 2, 1, 0, \dots, 0)$, so $I(w)$ is the inequality $4x_{-n+1}+3x_{-n+2}+2x_{-n+3}+x_{-n+4}\leq 6$, which is exactly inequality $D_2$. In fact, all inequalities $D_k$ and $U_k$ can be written in the form of $I(w)$ where $w = (k, k-1, \dots, 1, 0, \dots, 0)$ or $(0, \dots, 0, 1, \dots, k-1, k)$.
\end{example}

\begin{example}\label{ex:counter_example}
    When $n = 6$, the score vector $(-2, -2, -2, 2, 2, 2)$ corresponds to the frequency vector $x$ where $x_{-2} = x_2 = 3$ and all other $x_i$'s are zero. When $w = (0, 0, 1, 1, 0, 0)$, we see $v_d(w) = 1$ when $d = -2$ or $2$, and zero otherwise. Therefore $I(w)$ is the inequality $x_{-2} + x_{2} \leq 4$, which $x$ violates. This means there are no DS matrices with X-ray equal to $x$, let alone any permutations.

    Indeed, the sum of all entries in row 3, row 4, column 3, or column 4 of $A$ is always $4$ (counting entries in the intersection twice), but these entries also contain the two diagonals that must each sum to $3$, which is clearly impossible. See \cref{fig:visual} for a visual explanation.
\end{example}

\begin{figure}[]
    \centering
    \begin{subfigure}[t]{0.45\textwidth}
        \centering
        \scalebox{0.8}{
        \begin{tikzpicture}
            \readlist\w{2,1,0,0,0,0};
            \readlist\v{4,3,2,1,0,0,0,0,0,0,0};
            \draw[step=1] (0,0) grid (6,6);
            \foreach \x in {0,1,2,3,4,5} {
                \node[] at (-1.25, 5.5-\x) {$w_{\x}$};
                \node[] at (-0.5, 5.5-\x) {\w[\x+1]};
                \node[] at (5.5-\x, 7.25) {$w_{\x}$};
                \node[] at (5.5-\x, 6.5) {\w[\x+1]};
            }
            \foreach \x in {1,2,3,4,5,6} {
                \foreach \y in {1,2,3,4,5,6} {
                    \node[] at (6.5-\x, 6.5-\y) {\pgfmathparse{\w[\x]+\w[\y]} \pgfmathprintnumber{\pgfmathresult}};
                }
            }
            \foreach \x in {0,1,2,3,4,5} {
                \draw[gray, dashed] (5-\x, 6) -- (6, 5-\x);
                \draw[gray, dashed] (0, 5-\x) -- (5-\x, 0);
            }
            \foreach \x in {1,2,3,4,5,6} {
                \node[anchor=north west] at (6,6-\x) {\textcolor{blue}{\v[\x]}};
            }
            \foreach \x in {1,2,3,4,5} {
                \node[anchor=north west] at (6-\x,0) {\textcolor{blue}{\v[6+\x]}};
            }
        \end{tikzpicture}
        }
    \end{subfigure}
    \hspace{1cm}
    \begin{subfigure}[t]{0.45\textwidth}
        \centering
        \scalebox{0.8}{
        \begin{tikzpicture}
            \readlist\w{0,0,1,1,0,0};
            \readlist\v{0,0,0,1,0,0,0,1,0,0,0};
            \draw[step=1] (0,0) grid (6,6);
            \foreach \x in {0,1,2,3,4,5} {
                \node[] at (-1.25, 5.5-\x) {$w_{\x}$};
                \node[] at (-0.5, 5.5-\x) {\w[\x+1]};
                \node[] at (5.5-\x, 7.25) {$w_{\x}$};
                \node[] at (5.5-\x, 6.5) {\w[\x+1]};
            }
            \foreach \x in {1,2,3,4,5,6} {
                \foreach \y in {1,2,3,4,5,6} {
                    \node[] at (6.5-\x, 6.5-\y) {\pgfmathparse{\w[\x]+\w[\y]} \pgfmathprintnumber{\pgfmathresult}};
                }
            }
            \foreach \x in {0,1,2,3,4,5} {
                \draw[gray, dashed] (5-\x, 6) -- (6, 5-\x);
                \draw[gray, dashed] (0, 5-\x) -- (5-\x, 0);
            }
            \foreach \x in {1,2,3,4,5,6} {
                \node[anchor=north west] at (6,6-\x) {\textcolor{blue}{\v[\x]}};
            }
            \foreach \x in {1,2,3,4,5} {
                \node[anchor=north west] at (6-\x,0) {\textcolor{blue}{\v[6+\x]}};
            }
        \end{tikzpicture}
        }
    \end{subfigure}
    \caption{Two examples of computing $v(w)$ (in blue) from $w$ by taking the minimum along each diagonal in an $n\times n$ matrix of $w_i+w_j$'s.}\label{fig:compute-v}
\end{figure}

\begin{figure}[]
    \centering
    \scalebox{0.75}{
    \begin{tikzpicture} 
        \filldraw[lightgray] (0,2) rectangle (6,4);
        \filldraw[lightgray] (2,0) rectangle (4,6);
        \filldraw[gray] (2,2) rectangle (4,4);
        \draw[step=1] (0,0) grid (6,6);
        \draw[dashed] (2,6)--(6,2);
        \draw[dashed] (0,4)--(4,0);
        \node[anchor=north west] at (6,2) {3};
        \node[anchor=north west] at (4,0) {3};

        \node[] at (7.5,3) {6};
        \node[] at (8,3) {=};
        \filldraw[color=black, fill=lightgray] (8.5,2.5) rectangle (9.5,3.5);
        \draw[dashed] (8.5,3.5)--(9.5,2.5);
        \node[] at (10,3) {$\leq$};
        \filldraw[color=black, fill=lightgray] (10.5,2.5) rectangle (11.5,3.5);
        \node[] at (12,3) {$+$};
        \filldraw[color=black, fill=gray] (12.5,2.5) rectangle (13.5,3.5);
        \node[] at (13,3) {$\times 2$};
        \node[] at (14,3) {=};
        \node[] at (14.5,3) {4};
    \end{tikzpicture}
    }
    \caption{A proof that $(0,0,0,3,0,0,0,3,0,0,0)$ cannot be the X-ray of a DS matrix.}\label{fig:visual}
\end{figure}

\begin{proof}[Proof of \cref{prop:w-conditions}]
    Consider the set $\mathcal{P}\subseteq \mathbb{R}^{n\times n}$ of all $n\times n$ ``doubly sub-stochastic'' matrices (where row and column sums are \emph{at most} 1), which is a polytope defined by the following inequalities: 
    
    \begin{alignat*}{2}
        &E(i,j):& -a_{i,j} &\leq 0,\\
        &R(i):& \sum_{j'=1}^n a_{i,j'} &\leq 1,\\
        &C(j):& \sum_{i'=1}^n a_{i',j} &\leq 1.
    \end{alignat*}

    A (non-negative) vector $x\in \mathbb{R}^{2n-1}$ is the X-ray of a sub-DS matrix if and only if the affine subspace $\mathcal{H}_x\subseteq \mathbb{R}^{n\times n}$ given by the equalities $\sum_{\substack{i, j\in [n]\\ i-j=d}} a_{i,j} = x_d$ intersects $\mathcal{P}$. By the hyperplane separation theorem, $\mathcal{H}_x\cap \mathcal{P} = \varnothing$ if and only if there exists a non-zero vector $\bar{v}\in \mathbb{R}^{n\times n}$ such that $\min_{a\in \mathcal{H}_x} \bar{v}^Ta > \max_{a\in \mathcal{P}} \bar{v}^Ta$. Since $\mathcal{H}_x$ is an affine subspace, the value of $\bar{v}^Ta$ must be constant over all $a\in \mathcal{H}_x$, which in particular means that $\bar{v}_{i,j}$ takes the same value over each set of indices $(i,j)\in [n]^2$ for which $i-j = d$. If we write this value as $\bar{v}_d$ for each $d$, then $\min_{a\in \mathcal{H}_x} \bar{v}^Ta = \sum_{d=-n+1}^{n-1} \bar{v}_dx_d$, which we denote by $V$.

    On the other hand, by linear programming duality there exists a non-negative linear combination of the defining inequalities of $\mathcal{P}$ that amounts to some inequality $\sum_{i,j=1}^n \bar{v}_{i,j} a_{i,j} \leq W$ for some $W = \max_{a\in \mathcal{P}} \bar{v}^Ta < V$. If the coefficients of this linear combination are $e_{i,j}, r_i, c_j$ respectively, then we have $r_i + c_j - e_{i,j} = \bar{v}_{i,j} = \bar{v}_{i-j}$ for all $i, j\in [n]$, and \[W = \sum_{i=1}^n r_i + \sum_{j=1}^n c_j < \sum_{d=-n+1}^{n-1} \bar{v}_dx_d = V.\]
    
    Since all $x_d$'s are non-negative, we can increase each $\bar{v}_{d}$ without affecting this inequality as long as all $e_{i,j} = r_i + c_j - \bar{v}_{i-j} \geq 0$, so we can assume that $\bar{v}_{d} = \min_{\substack{i, j\in [n]\\ i-j=d}} (r_i+c_j)$ for each $d$. Moreover, if we let $w_i = \frac{1}{2}(r_{i+1} + c_{n-i})$ for $i = 0, 1, \dots, n-1$, then for each $d$ there is \[\min_{\substack{i, j\in [n]\\ i-j=d}} (r_i+c_j) \leq \min_{\substack{0\leq i, j\leq n-1\\ i+j=d+(n-1)}} (w_i + w_j)=v_d(w),\] and the equality can be achieved by taking $r_{i+1} = c_{n-i} (=w_i)$ for each $i$. Therefore, $\mathcal{H}_x\cap \mathcal{P} = \varnothing$ if and only if there exists $w_0, \dots, w_{n-1}\geq 0$ such that \[ 2\sum_{i=0}^{n-1} w_i < \sum_{d=-n+1}^{n-1} v_d(w)x_d,\] which is exactly the contrapositive of the statement, restricted to the hyperplane $\sum_d x_d = n$.
\end{proof}

\subsection{The facets of the X-ray polytope} Since the set of all X-rays of a DS matrix is simply a projection of the Birkhoff polytope, it will also be a polytope with finitely many facets, and hence we only need to check $I(w)$ for finitely many vectors $w$. While not necessary for the proof in the following section, we show here how to obtain this finite set for each $n$. 

First, observe that each X-ray $x$ satisfies two linear inequalities $\sum_{d} x_d = n$ and $\sum_{d} dx_d = 0$, which allows us to obtain \begin{equation}
    x_{-n+1} = \frac{n}{2}-\frac{1}{2(n-1)}\sum_{d=-n+2}^{n-2}(n-1-d)x_d, \quad x_{n-1} = \frac{n}{2} - \frac{1}{2(n-1)}\sum_{d=-n+2}^{n-2}(n-1+d)x_d. \label{eq:x_end}
\end{equation}

Plugging these two equations into $I(w)$ reduces it to \begin{equation}
    I'(u): \quad \sum_{d=-n+2}^{n-2} v_d(u) x_d \leq 2\sum_{i=1}^{n-2} u_i, \label{eq:i_prime}
\end{equation}under a linear change of variables $u_i = w_i - \frac{n-1-i}{n-1}w_0 - \frac{i}{n-1}w_{n-1}$ for $i = 0, \dots, n-1$. Note that $u_0 = u_{n-1} = 0$, and $(u_1, \dots, u_{n-2})$ ranges over all possible points in $\mathbb{R}^{n-2}$ as $w$ ranges over all non-negative real vectors (one can always find some $w$ that give rise to a given $u$ by letting $w_0$ and $w_{n-1}$ be sufficiently large).

Observe that all coefficients of $I'(u)$ are piecewise-linear functions of $u$, so each $I'(u)$ where $u$ is inside a fully-linear piece is implied by the ones on the boundary rays of that piece. Consequently, we obtain the following description of the facets:

\begin{proposition}\label{prop:rays}
    Let $\mathcal{T} = \{T_d\}_{d=-n+2}^{n-2}$ be a tropical hyperplane arrangement in $\mathbb{R}^{n-2}$, where each $T_d$ is the set of all points $u$ where the minimum in $v_d(u)=\min_{\substack{0\leq i\leq j\leq n-1\\ i+j=d+(n-1)}} (u_i + u_j)$ is attained at least twice (taking $u_0 = u_{n-1} = 0$).
    
    The projection map $\bar{x}$ that sends $A\in \mathbb{R}^{n\times n}$ to its ``trimmed X-ray'' $\bar{x}(A) = (x_{-n+2}, \dots, x_{n-2})$ sends the Birkhoff polytope to a $(2n-3)$-dimensional polytope. The facets of this polytope are either $x_d \geq 0$ for $d\in \{-n+1, \dots, n-1\}$ (where $x_{-n+1}$ and $x_{n-1}$ are given by \eqref{eq:x_end}) or have the form $I'(u)$ in \eqref{eq:i_prime}, where each $u$ is on one of the rays of $\mathcal{T}$.
\end{proposition}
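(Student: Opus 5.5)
The plan is to combine \cref{prop:w-conditions} with the substitution \eqref{eq:x_end}, and then use the piecewise-linear structure of $u\mapsto v(u)$ to cut the infinite family of inequalities down to finitely many.

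\textbf{Dimension.} The image of the Birkhoff polytope under the full X-ray map lies in the codimension-2 affine space $\sum_d x_d = n$, $\sum_d d\,x_d=0$. The trimmed X-ray map $\bar{x}$ is this map followed by an affine isomorphism of that space onto $\mathbb{R}^{2n-3}$, via \eqref{eq:x_end}. For full dimensionality it suffices to exhibit $2n-2$ affinely independent X-rays. I would take the identity permutation together with small perturbations: for each $d\in\{-n+2,\dots,n-2\}$, a DS matrix whose trimmed X-ray differs from that of $J/n$ by a small multiple of $e_d$, adjusted by the end coordinates. A simpler route is to note that the permutation X-rays already affinely span $\mathbb{R}^{2n-3}$; for instance, cyclic shifts and transpositions of adjacent elements produce enough independent directions.

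\textbf{Reduction to $I'(u)$.} By \cref{prop:w-conditions}, a point $y=(x_{-n+2},\dots,x_{n-2})$ lies in the image exactly when $x_{\pm(n-1)}\ge 0$ (defined via \eqref{eq:x_end}), $x_d\ge0$ for all other $d$, and $I(w)$ holds for all $w\ge 0$. I would verify the change of variables directly. The map $w\mapsto u$ subtracts from $w$ the affine interpolation $\ell_i=\frac{n-1-i}{n-1}w_0+\frac{i}{n-1}w_{n-1}$. Since $\ell_i+\ell_j$ depends only on $i+j$, we get $v_d(w)=v_d(u)+\ell_i+\ell_j$ with $i+j=d+n-1$, which is an affine function of $d$. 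Substituting \eqref{eq:x_end} eliminates exactly these affine-in-$d$ terms using the two linear identities, which yields $I'(u)$. The one point to check is that the end coordinates receive coefficient $v_{\pm(n-1)}(w)=2w_0,2w_{n-1}$, so those terms cancel as well. Surjectivity onto $u\in\mathbb{R}^{n-2}$ then holds as the paper notes.

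\textbf{Finiteness via rays.} For fixed $y$, the function $\phi(u)=\sum_d v_d(u)x_d-2\sum_i u_i$ is positively homogeneous in $u$, since each $v_d$ is a minimum of linear forms. On each closed cell $C$ of the tropical arrangement $\mathcal{T}$, every $v_d$ is linear, so $\phi$ is linear on $C$. Each cell is a polyhedral cone; it is not pointed in general, but I would note that the lineality space of $\mathcal{T}$ is trivial because $u_0=u_{n-1}=0$ pins the coordinates. Hence $C$ is the conical hull of its rays, and $\phi\le 0$ on $C$ follows from $\phi\le0$ on those rays. This gives the polytope as the intersection of the nonnegativity constraints and the finitely many $I'(u)$ with $u$ on rays. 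Every facet-defining inequality must appear among the defining inequalities of any such irredundant H-description, so the facets have the stated form.

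\textbf{Main obstacle.} The hard step is this last one: justifying that every cell is pointed, i.e.\ that the fan $\mathcal{T}$ has no lineality space. A nonzero $u$ with all $v_d$ linear everywhere along $\mathbb{R}u$ would need both $u$ and $-u$ in a common cell. My plan is to use $T_d$ for $d$ near $\pm(n-2)$: there the minima compare $u_1$ with $u_0+u_2=u_2$, and so on, which should force $u_i=0$ inductively. If pointedness fails, I would instead state the result with rays and lineality generators, which the homogeneity argument handles equally well.
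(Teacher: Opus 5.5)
Your proposal is correct and follows the paper's own (very brief) argument. You eliminate $x_{\pm(n-1)}$ via \eqref{eq:x_end} so that $I(w)$ becomes $I'(u)$ with $u$ ranging over all of $\mathbb{R}^{n-2}$, and then use that $u\mapsto\sum_d v_d(u)x_d-2\sum_i u_i$ is linear on each cell of $\mathcal{T}$, so the inequalities at the rays suffice.

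The two points you flag, which the paper leaves implicit, are both easy. For pointedness, a lineality vector $t$ must satisfy $t_0+t_k=t_1+t_{k-1}$, giving $t_k=kt_1$ (the first comparison is $2u_1$ versus $u_2$, not $u_1$ versus $u_2$); the diagonal $i+j=n-1$ with $t_0=t_{n-1}=0$ then forces $(n-1)t_1=0$, so every cell is pointed. For the dimension, the switch matrices $E_{i,j}-E_{i,j+1}-E_{i+1,j}+E_{i+1,j+1}$ have linearly independent X-rays $2e_d-e_{d-1}-e_{d+1}$ for every $|d|\le n-2$, so perturbing $J/n$ along them gives the full dimension $2n-3$.
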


Note that $I'(u)$ is invariant under scaling $u$ by a positive factor, so it suffices to compute all rays of $\mathcal{T}$ and take one point on each ray. It is unclear whether these rays have any simple descriptions.

\begin{example}
    Let us demonstrate this process in the case where $n = 5$. In this case, the inequality $I'(u)$ can be written as \begin{align*}
        u_1x_{-3} + \min(2u_1, u_2) x_{-2} + \min(u_1+u_2, u_3) x_{-1} + \min(0, u_1+u_3, 2u_2) x_0 & \\
        + \min(u_1, u_2+u_3)x_{1} + \min(u_2, 2u_3)x_{2} + u_3x_3 & \leq 2(u_1+u_2+u_3).
    \end{align*}
    The coefficients of $x_{-2}, x_{-1}, x_0, x_1, x_2$ each define a tropical hyperplane with $u_1, u_2, u_3$ as the variables, giving an arrangement as shown in \cref{fig:arr}. The direction vectors of the 16 rays of this arrangement are (up to scaling): \[\pm(1,2,3), \pm(3,2,1), \pm(1,2,1), \pm(1,2,-1), \pm(-1, 2,1), \pm(1,0,1), \pm(1,0,-1), (1,0,0), (0,0,1). \]
    Along with the 9 inequalities $x_d\geq 0$ for $-4\leq d \leq 4$, there are 25 potential facets for the projected polytope. We emphasize that they may not necessarily all correspond to facets: When $n = 5$ the X-ray polytope has only 23 facets, as the inequality $I'((1,0,0)): x_{-3} \leq 2$ is implied by summing the following two inequalities: \begin{alignat*}{2}
        &x_{-4}\geq 0:& 7x_{-3} + 6x_{-2} + 5x_{-1} + 4x_{0} + 3x_1 + 2x_2 + x_3 &\leq 20, \\
        &I'((-3, -2, -1)):\ & -3x_{-3}-6x_{-2}-5x_{-1}-4x_{0}-3x_1-2x_2-x_3&\leq -12,
    \end{alignat*}
    and similar for $I'((0,0,1))$.

    \begin{figure}[h!]
        \centering
        \includegraphics[scale=0.25]{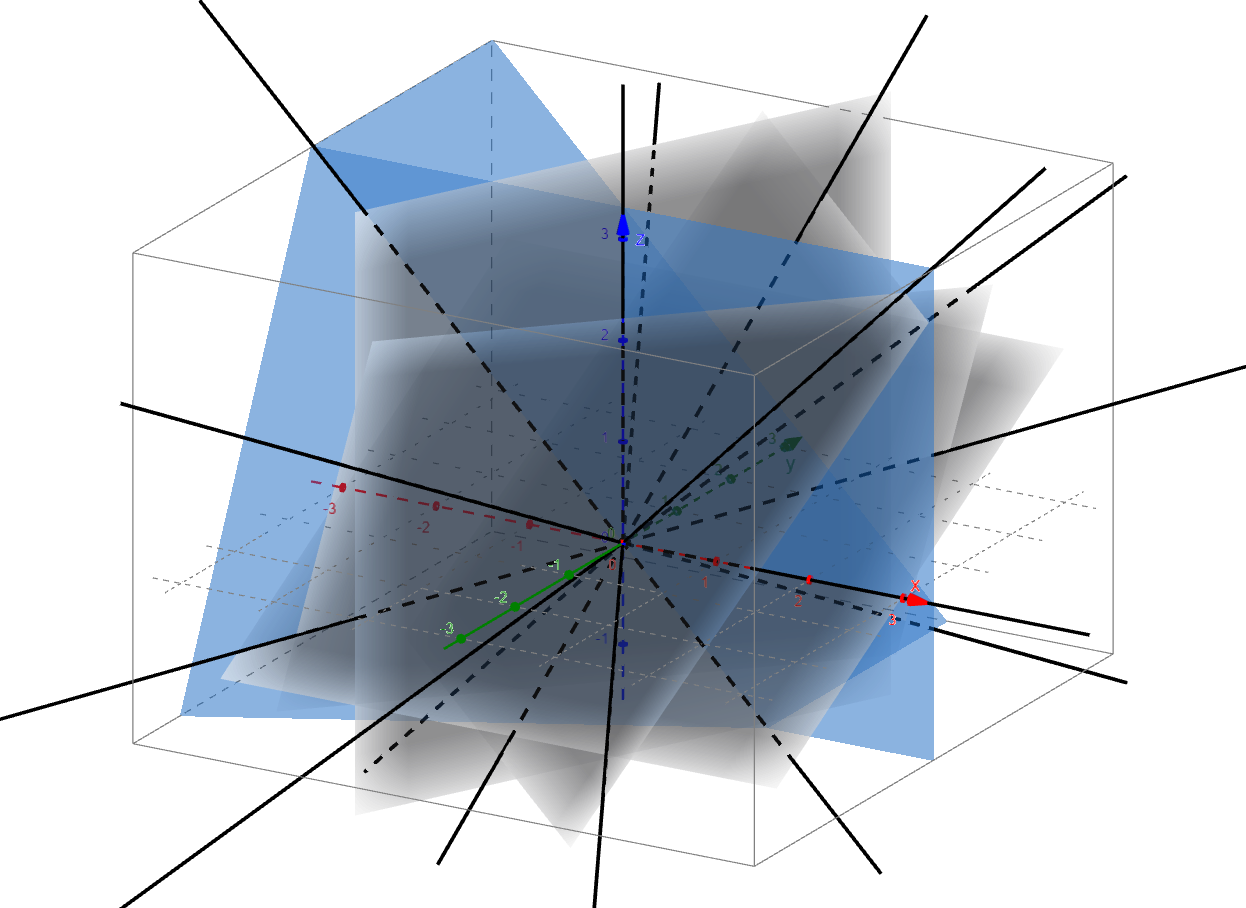}
        \caption{The arrangement $\mathcal{T}$ in \cref{prop:rays} and its rays for $n = 5$, drawn in Geogebra 3D. The tropical hyperplane $T_0$ is highlighted in blue to emphasize that they are not all normal hyperplanes.}\label{fig:arr}
    \end{figure}
\end{example}

\section{Equivalence of constraints in the binary case}\label{sec:equiv}

From \cref{ex:fund_weights} and \cref{ex:counter_example}, we see that $D_k$ and $U_k$ are special cases of $I(w)$ but do not imply all $I(w)$. However, if we restrict to the binary case, it turns out that they are in fact equivalent. In particular, we will show that every $I(w)$ can be written as a non-negative linear combination of $D_k, U_k$, and $x_d\leq 1$.

To this end, we first show a ``one-sided'' version of this equivalence.

\begin{proposition}\label{prop:one-sided}
    Let $x\in \mathbb{R}^{2n-1}$ be a vector such that $x_d\leq 1$ for every $d$. If $x$ satisfies $D_k$ for $1\leq k\leq n-1$, then it satisfies $I(w)$ for every non-negative vector $w\in\mathbb{R}^{n}$ where $w_{n-1} = 0$.
\end{proposition}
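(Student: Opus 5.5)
The plan is to prove something stronger: every $I(w)$ with $w_{n-1}=0$ is a non-negative linear combination of the $D_k$ and the inequalities $x_d\le 1$. Concretely, I want to write the coefficient vector $v(w)$ as
\[
v(w)=\sum_{k=1}^{n-1} a_k\,c^{(k)}+\sum_{d} b_d\, e_d ,\qquad a_k,b_d\ge 0,\qquad \sum_k a_k\,k(k+1)+\sum_d b_d\le 2\sum_{i=0}^{n-1} w_i ,
\]
where $c^{(k)}$ is the coefficient vector of $D_k$, i.e.\ $c^{(k)}_{-n+i}=(2k-i+1)_+$ for $i\le 2k$, and $e_d$ is a unit vector. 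Adding the corresponding inequalities then gives $I(w)$.

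Reindex by $m=d+n-1\in\{0,\dots,2n-2\}$, so that $v_m=\min_{i+j=m}(w_i+w_j)$. The hypothesis $w_{n-1}=0$ forces $v_m=\min(\ldots)\le w_{m-n+1}$ for $m\ge n-1$; this is what makes a one-sided statement plausible. The guiding comparison vector is $x^*=(1,0,1,0,\dots,1)$. Every $D_k$ is tight at $x^*$, and
\[
\sum_m v_m x^*_m=\sum_j v_{2j}\le \sum_j 2w_j .
\]
So the budget is exactly what $x^*$ uses, and any extra cost from $e_d$ terms must be paid for by the slack $2w_j-v_{2j}$.

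The $c^{(k)}$ are the discrete hinge functions $m\mapsto(2k+1-m)_+$. Non-negative combinations of them are exactly the functions that are non-increasing, convex on the integers, and vanish eventually. I will proceed in three steps.

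\begin{enumerate}
\item Construct a convex non-increasing minorant $f$ of $v$, chosen so that $f$ agrees with $v$ at enough even points and $\sum_k a_k k(k+1)=\sum_j f(2j)$. This uses that $\sum_k a_k c^{(k)}$ evaluated against $x^*$ gives exactly $\sum_k a_k k(k+1)$.
\item Set $b_m=v_m-f(m)\ge 0$.
\item Verify the budget inequality
\[
\sum_j f(2j)+\sum_m \bigl(v_m-f(m)\bigr)\le 2\sum_j w_j ,
\]
which amounts to showing $\sum_{m\text{ odd}}(v_m-f(m))\le\sum_j\bigl(2w_j-v_{2j}\bigr)$.
\end{enumerate}

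To choose $f$, I would first try taking it piecewise linear through the points $(2j,\,2\min_{i\ge j}w_i)$, suitably convexified. For odd $m=2j+1$, the bound $v_m\le w_j+w_{j+1}$ together with convexity should control the odd defects by the even slacks. An induction on the support of $w$ may be cleaner: repeatedly decrease $w$ on its last nonzero block and track how $v$ changes.

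The main obstacle is this choice of $f$, because $v(w)$ need not be monotone or convex, and the odd-position terms $b_m$ cost budget with no matching contribution from $x^*$. The cleanest escape I would try first is a different argument altogether: an exchange argument on $x$ rather than on coefficients. Maximize the linear functional $\sum_m v_m x_m$ over the polytope $\{x\le 1,\ D_k\ \forall k\}$. I would show that any vertex can be moved toward $x^*$ without decreasing the objective, by shifting mass from an odd position $m$ to the even positions $m\pm1$ and using $v_{m-1}+v_{m+1}\ge v_m$-type inequalities that follow from the min-plus structure of $v$. This would give $\sum_m v_m x_m\le\sum_m v_m x^*_m\le 2\sum_j w_j$. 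One caveat: if the paper intends $x\ge 0$ implicitly, the inequalities $x_d\ge 0$ may also be used, which simplifies both routes.
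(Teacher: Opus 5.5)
\textbf{There is a genuine gap.} Your framework is fine: you want a certificate writing $I(w)$ as a non-negative combination of the $D_k$ and $x_d\le 1$, and such a certificate does exist. But you never construct it, and constructing it is the whole content of the proposition. Step~1 and the budget inequality of Step~3 are left open, and both concrete suggestions you make fail.

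\emph{Your proposed $f$ is identically zero.} Since $w_{n-1}=0$ lies in every suffix, $\min_{i\ge j}w_i=0$ for all $j$, so any $f$ through the points $(2j,2\min_{i\ge j}w_i)$ vanishes. The budget then reads $\sum_m v_m\le 2\sum_j w_j$. This already fails for $w=(1,0,\dots,0)$: there $v=(2,1,0,\dots,0)$, giving $3>2$.

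\emph{The exchange argument rests on a false claim.} It needs $\sum_m v_mx_m\le\sum_m v_mx^*_m$ on $\{0\le x\le 1,\ D_k\}$. Take $n=6$ and $w=(0,0,1,1,0,0)$, the weight vector from the paper's counterexample. Then $v$ is supported on $d=\pm 2$, which are odd values of $m$, so $v\cdot x^*=0$. But $x$ with $x_{-2}=x_2=1$ and all other entries $0$ satisfies every $D_k$ and has $v\cdot x=2$. The inequality $v_{m-1}+v_{m+1}\ge v_m$ you want also fails here at $m=3$, since $0+0<1$. The slack $2w_j-v_{2j}$ at $x^*$ is genuinely spent elsewhere, so pushing $x$ toward $x^*$ cannot work.

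\emph{Two smaller slips.} With $m=d+n-1$, the coefficient vector of $D_k$ is $(2k-m)_+$, not $(2k+1-m)_+$. Also, the cone spanned by the $c^{(k)}$ is not all convex non-increasing eventually-zero functions: its members are additionally affine on every $[2j,2j+2]$. For example, $(1,0,0,\dots)$ is not in it. So a ``convexified'' minorant with kinks at odd points is not admissible, and the odd values are exactly what your budget inequality concerns.

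\textbf{The missing idea} is the paper's greedy peeling combined with induction on $n$.
\begin{enumerate}
\item Choose $m\in[n-1]$ minimizing $w_{n-1-m}/m$, let $\alpha$ be that minimum, and set $w'=w-\alpha(n-1,n-2,\dots,1,0)$.
\item Then $w'\ge 0$ and every $v_d$ drops by exactly $(n-1-d)\alpha$. Hence $I(w)=I(w')+\alpha D_{n-1}$, and $w'$ now has a \emph{second} zero at index $n-1-m$.
\item For $d\le n-1-2m$, restrict the minimum to indices $\le n-1-m$. This is a smaller instance of the same statement, with the same $D_k$ after reindexing. Induction handles it at cost $2\sum_{i\le n-1-m}w'_i$.
\item For $n-1-2m\le d\le n-1-m$, the two zeros give $v_d(w')\le w'_{n-1-m}+w'_{d+m}=w'_{d+m}$.
\item For $n-1-m\le d\le n-1$, they give $v_d(w')\le w'_d+w'_{n-1}=w'_d$.
\item Using $0\le x_d\le 1$, the last two ranges each charge every $w'_i$ with $n-m\le i\le n-1$ once, so together they cost at most $2\sum_{i=n-m}^{n-1}w'_i$. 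Adding the three pieces gives exactly $2\sum_i w'_i$.
\end{enumerate}
In your language, this builds the $a_k$ recursively and produces $b$ with $v\le\sum_k a_kc^{(k)}+b$ coordinatewise. That is domination rather than equality, which is why $x_d\ge 0$ is used, as your caveat anticipated; the paper's later remark states $0\le x_d\le 1$ explicitly.
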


\begin{proof}
    We use strong induction on $n$, where the base case $n = 1$ is trivial. 
    
    For $n > 1$, let $m$ be an integer in $[n-1]$ that minimizes $w_{n-1-m}/m$, and let this minimum value be $\alpha$. Let $w' = w - \alpha\cdot (n-1, n-2, \dots, 1, 0)$ (so $w'_i = w_i - (n-1-i)\alpha$), then $w'$ is also a non-negative vector. Moreover, it is not difficult to see that $v_d(w') = v_d(w) - (n-1-d)\alpha$ for every $d$, so 
    \begin{align*}
        \sum_{d=-n+1}^{n-1} v_d(w)x_d &= \sum_{d=-n+1}^{n-1} v_d(w')x_d + \alpha \sum_{d=-n+1}^{n-1} (n-1-d)x_d \\
        &=\sum_{d=-n+1}^{n-1} v_d(w')x_d + \alpha\sum_{i=1}^{2n-2} (2n-1-i)x_{-n+i}.
    \end{align*}
    Since $2\sum_{i} w_i = 2\sum_{i} w'_i + \alpha(n-1)n$, we have $I(w) = I(w') + \alpha\cdot D_{n-1}$, and it suffices to show that $I(w')$ holds. 
    
    To do this, we split the LHS of $I(w')$ into three parts and bound separately: \begin{align*}
        \sum_{d=-n+1}^{n-1} v_d(w') x_d =&\ \sum_{d=-n+1}^{n-1-2m} v_d(w')x_d + \sum_{d=n-1-2m}^{n-1-m} v_d(w')x_d + \sum_{n-1-m}^{n-1} v_d(w')x_d \\
        \leq&\ \sum_{d=-n+1}^{n-1-2m} \left(\min_{\substack{0\leq i, j\leq n-1-m\\ i+j=d+(n-1)}} (w'_i+w'_j)\right) x_d + \sum_{d=n-1-2m}^{n-1-m} (w'_{n-1-m}+w'_{d+m})x_d \\ &\hspace{7cm}+\sum_{d=n-1-m}^{n-1} (w'_{n-1}+w'_{d})x_d \\
        \leq&\ 2\sum_{i=0}^{n-1-m} w'_i + \sum_{d=n-1-2m}^{n-1-m} w'_{d+m} + \sum_{d=n-1-m}^{n-1} w'_d\\
        =&\ 2\sum_{i=0}^{n-1-m} w'_i + 2\sum_{i=n-m}^{n-1} w'_i = 2\sum_{i=0}^{n-1} w'_i.
    \end{align*}
    In the second inequality, the first part is given by the induction hypothesis, and the second and third parts use the fact that $x_d\leq 1$ for all $d$. We also made use of the fact that $w'_{n-1-m} = w'_{n-1} = v_{n-1-2m}(w')= v_{n-1-m}(w')=0$ throughout this computation.
\end{proof}

By symmetry, the inequalities $U_k$ also imply all inequalities $I(w)$ where $w_0 =0$ if $0\leq x_d\leq 1$ for every $d$. We can put them together for a full ``two-sided'' equivalence.

\begin{proposition}\label{prop:two-sided}
    Let $x\in \mathbb{R}^{2n-1}$ be a vector such that $ x_d\leq 1$ for every $d\in [-n+1, n-1]$. If $x$ satisfies the $D_k$ and $U_k$ for $1\leq k\leq n-1$, then it satisfies $I(w)$ for every non-negative vector $w\in \mathbb{R}^n$.
\end{proposition}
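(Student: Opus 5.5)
The plan is to reduce to the case where $w$ vanishes at some index, and then cut the diagonals at that index so that the left part is handled by \cref{prop:one-sided} and the right part by its mirror image. Throughout I use the equation $S$ ($\sum_d x_d = n$), which is part of the standing setting of \cref{prop:w-conditions}. The reduction step needs it, and the statement as written does not list it among the hypotheses.

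\emph{Step 1: normalize $w$.} Let $c = \min_i w_i \ge 0$ and set $\tilde w = w - c\cdot(1,\dots,1)$, which is again non-negative. Every candidate pair in the minimum defining $v_d$ shifts by exactly $2c$, so $v_d(w) = v_d(\tilde w) + 2c$. Hence
\[\sum_d v_d(w)x_d = \sum_d v_d(\tilde w)x_d + 2c\sum_d x_d, \qquad 2\sum_i w_i = 2\sum_i \tilde w_i + 2cn.\]
Therefore $I(w) = I(\tilde w) + 2c\cdot S$, and it suffices to prove $I(w)$ when $w_m = 0$ for some index $m\in[0,n-1]$. I also note that every $v_d(w)\ge 0$.

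\emph{Step 2: split at the zero.} For each $d$, write $t = d + (n-1)\in[0,2n-2]$.
\begin{itemize}
\item If $t \le 2m$, every pair $i+j=t$ with $0\le i,j\le m$ is admissible. So $v_d(w) \le v^{L}_d$, where $v^L$ is computed from $w^L = (w_0,\dots,w_m)$ as a problem of size $m+1$ on the diagonals $t = 0,\dots,2m$.
\item If $t \ge 2m$, similarly $v_d(w)\le v^R_d$, computed from $w^R = (w_m,\dots,w_{n-1})$ on the diagonals $t = 2m,\dots,2n-2$.
\item The overlap is the single diagonal $t = 2m$. There $v^L = v^R = 2w_m = 0$, so that term contributes nothing.
\end{itemize}
This gives
\[\sum_d v_d(w)x_d \;\le\; \sum_{t=0}^{2m} v^L x_{t-n+1} \;+\; \sum_{t=2m}^{2n-2} v^R x_{t-n+1}.\]

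\emph{Step 3: apply the one-sided result.} I view $(x_{-n+1},\dots,x_{-n+1+2m})$ as a vector of length $2(m+1)-1$. Its entries are at most $1$. Its $D_k$ inequalities for $1\le k\le m$ are literally the original $D_k$, since $D_k$ only involves $x_{-n+1},\dots,x_{-n+2k}$ and these lie in the range. Moreover $w^L$ is non-negative with last entry $w_m = 0$. So \cref{prop:one-sided} (in size $m+1$) bounds the left sum by $2\sum_{i\le m} w_i$. Symmetrically, the mirrored version of \cref{prop:one-sided} with the $U_k$, $k\le n-1-m$, bounds the right sum by $2\sum_{i\ge m} w_i$. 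Adding the two bounds gives $2\sum_i w_i + 2w_m = 2\sum_i w_i$, which is $I(w)$.

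The main obstacle is bookkeeping rather than ideas. One must check that the restricted vectors genuinely satisfy the hypotheses of \cref{prop:one-sided} in the smaller size, in particular that the indices of $D_k$ and $U_k$ align after the shift. One must also confirm that the inequality $v_d(w)\le v^L_d$ goes in the right direction, which it does because restricting the range of a minimum can only increase it. The degenerate cases $m=0$ and $m=n-1$ reduce directly to \cref{prop:one-sided} or its mirror.
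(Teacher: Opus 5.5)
Your argument follows the paper's: subtract the minimum of $w$, split the diagonals at the zero $w_m=0$, then apply \cref{prop:one-sided} to the left block and its mirror to the right block. Steps 2 and 3, including the index alignment of $D_k$ and $U_k$, are correct.

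The gap is in Step 1. You add the hypothesis $S$, which the proposition does not assume, and you say the reduction needs it. It does not. Since $c\ge 0$, you only need the inequality $\sum_d x_d\le n$, which is $I(\mathbf{1})$, and this already follows from the stated hypotheses. Adding $D_{n-1}$ and $U_{n-1}$, the coefficient of each $x_d$ becomes $(n-1-d)+(n-1+d)=2n-2$, so
\[
(2n-2)\sum_{d=-n+1}^{n-1} x_d \le 2n(n-1),\qquad\text{i.e.}\qquad \sum_{d} x_d\le n
\]
for $n\ge 2$; for $n=1$ this is just $x_0\le 1$. Then $I(w)=I(\tilde w)+c\,I(\mathbf{1})$ is a non-negative combination, and the reduction goes through without $S$. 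This is exactly how the paper's proof starts.

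The difference matters. \cref{rem:sub}, the doubly sub-stochastic version in which $\sum_d x_d=n$ is dropped, relies on the $S$-free form of the proposition. As written, your proof establishes only the weaker statement with $S$ added. That suffices for \cref{thm:main} but not for the proposition as stated.

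On which extra assumption is actually in play: your Step 2 multiplies $v_d(w)\le v^L_d$ by $x_d$, and the proof of \cref{prop:one-sided} does the same. Both therefore tacitly use $x_d\ge 0$. This is harmless in the intended setting $0\le x_d\le 1$, but it is the assumption worth flagging, not $S$.
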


\begin{proof}
    First, note that summing $D_{n-1}$ and $U_{n-1}$ gives \[\sum_{d=-n+1}^{n-1} (2n-2)x_d \leq 2(n-1)n \quad \Leftrightarrow\quad \sum_{d=-n+1}^{n-1} 2x_d \leq 2n \quad \Leftrightarrow\quad I(1,1,\dots,1).\]
    
    Let $w$ be a given non-negative real vector, and suppose that $w_m$ is (one of) the smallest entries of this vector for some index $m\in \{0, \dots, n-1\}$. It is not difficult to see that $I(w) = I(w - w_m\cdot \mathbf{1}) + w_m I(\mathbf{1})$, so $I(w-w_m\cdot \mathbf{1})$ implies $I(w)$. Therefore, we can assume without loss of generality that $w_m = 0$. In particular, we have $v_{2m-(n-1)}(w) = 0$.

    Now let $w^{\downarrow} = (w_0, w_1, \dots, w_{m-1}, w_m=0)$ (indexed from $0$ to $m$) and $w^{\uparrow} = (w_m = 0, w_{m+1}, \dots, w_{n-1})$ (indexed from $m$ to $n-1$). 
    Also let \[v_d^{\downarrow}(w) =v_d(w^{\downarrow})= \min_{\substack{0\leq i, j\leq m\\ i+j=d+(n-1)}} (w_i+w_j) \text{\quad and\quad} v_d^{\uparrow}(w) = v_d(w^{\uparrow}) = \min_{\substack{m\leq i, j\leq n-1\\ i+j=d+(n-1)}} (w_i+w_j)\] for $d\leq 2m-(n-1)$ and for $d\geq 2m-(n-1)$ respectively. It is not difficult to see that $v_d(w) \leq v_d^{\downarrow}(w)$ when $d\leq 2m-(n-1)$ and $v_d(w)\leq v_d^{\uparrow}(w)$ when $d\geq 2m-(n-1)$ as $v_d^{\downarrow}$ and $v_d^{\uparrow}$ restrict the set of $(i, j)$ that one takes the minimum over compared to $v_d$.
    
    We also note that $\sum_{i} w^{\downarrow}_i + \sum_{i} w^{\uparrow}_i = \sum_{i} w_i$, so if we show that the inequalities $I(w^{\downarrow})$ (when restricted to $d\leq 2m-(n-1)$) and $I(w^{\uparrow})$ (when restricted to $d\geq 2m-(n-1)$) are true, then summing the two will give an inequality that is (non-strictly) stronger than $I(w)$. Since these two inequalities are symmetric, it suffices to show that $I(w^{\downarrow})$ holds. This is exactly the content of \cref{prop:one-sided}.
\end{proof}

\begin{example}\label{proof-example}
    Let us demonstrate the proof of \cref{prop:one-sided} and \cref{prop:two-sided} more concretely with an example vector $w = (8, 5, 5, 1, 9, 2)$ (where $n = 6$). We recommend following along this example with reference to \cref{fig:proof-example}. 
    
    For this example, we have $v(w) = (16, 13, 10, 9, 6, 6, 2, 7, 3, 11, 4)$, so the inequality is 
    \begin{multline*}
        I(w):\ 16x_{-5}+13x_{-4}+10x_{-3}+9x_{-2}+6x_{-1}+6x_0+2x_{1}+7x_2+3x_3+11x_4+4x_5 \\ 
        \leq 2(8+5+5+1+9+2)=60.
    \end{multline*} 
    Subtracting the smallest entry $w_3 = 1$ from each $w_i$ amounts to subtracting $2$ from each $v_d(w)$ and hence $2\sum_{d}x_d$ from the left side and $2n = 12$ from the right side of $I(w)$, which makes the inequality stronger since $\sum_{d}x_d \leq n =6$. Therefore, it suffices to prove $I(w')$ where $w' = (7, 4, 4, 0, 8, 1)$ and $v(w') = (14, 11, 8, 7, 4, 4, 0, 5, 1, 9, 2)$, which has the form
    \begin{multline*}
        I(w'):\ 14x_{-5}+11x_{-4}+8x_{-3}+7x_{-2}+4x_{-1}+4x_0 + 0x_1 +5x_2+x_3+9x_4+2x_5 \\ 
        \leq 2(7+4+4+0+8+1)=48.
    \end{multline*} 

    Now, we let $w'^{\downarrow} = (7,4,4,0)$, $w'^{\uparrow} = (0, 8, 1)$, $v^{\downarrow}(w') = (14, 11, 8, 7, 4, 4, 0, {\color{gray}{0, 0, 0, 0}})$, and $v^{\uparrow}(w') = ({\color{gray}{0, 0, 0, 0, 0, 0,}}\, 0, 8, 1, 9, 2)$, which correspond to the inequalities \begin{alignat*}{2}
        &I(w'^{\downarrow}):\ & 14x_{-5}+11x_{-4}+8x_{-3}+7x_{-2}+4x_{-1}+4x_0+0x_1 &\leq 2(7+4+4+0)=30, \\
        &I(w'^{\uparrow}):& 0x_1+8x_2+x_3+9x_4+2x_5&\leq 2(0+8+1)=18.
    \end{alignat*}
    Because $v(w')\leq v^{\downarrow}(w') +v^{\uparrow}(w')$ coordinate-wise (notice that $v_2(w') = 5 < 8 = v_2^{\uparrow}(w')$ is strict because the minimum on the LHS is achieved at $i = 2$ (and $j=5$) which is outside the range $\{3,4,5\}$), so $I(w'^{\downarrow})$ and $I(w'^{\uparrow})$ together imply $I(w')$.

    We now show that $I(w'^{\downarrow})$ follows from the inequalities $D_1, D_2, D_3$ as well as $x_d\leq 1$. We compute $w'^{\downarrow}_0/3 = 7/3$, $w'^{\downarrow}_1/2 = 2$, $w'^{\downarrow}_2/1 = 4$, so $\alpha = \min(7/3, 2, 4) = 2$, and we subtract $2\cdot (3,2,1,0)$ from $w'^{\downarrow}$ to get $w''^{\downarrow} = (1,0,2,0)$. This also means that $v(w''^{\downarrow}) = v(w'^{\downarrow}) - 2(6,5,4,3,2, 1,0) = (2, 1, 0, 1, 0, 2, 0)$, which amounts to subtracting $12x_{-5}+10x_{-4}+\dots+2x_0$ from the left side and $4(3+2+1+0)=24$ from the right side of $I(w'^{\downarrow})$, giving the inequality \[I(w''^{\downarrow}):\ 2x_{-5}+x_{-4}+0x_{-3}+x_{-2}+0x_{-1}+2x_0 + 0x_1 \leq 6.\] This is the same as subtracting $2D_3$ from $I(w'^{\downarrow})$, so it suffices to prove $I(w''^{\downarrow})$. 
    Because $w''^{\downarrow}_1 = 0$, we can use the induction hypothesis on the first two coordinates of $w''^{\downarrow}$ to show that $2x_{-5}+x_{-4}+0x_{-3} \leq 2$ (in this case this is the same as $D_1$). For the remaining terms we note that the coefficients $v_{-3}, v_{-2}, v_{-1}, v_0, v_1$ are bounded above by \[(w''^{\downarrow}_1+w''^{\downarrow}_1, w''^{\downarrow}_1+w''^{\downarrow}_2, w''^{\downarrow}_1+w''^{\downarrow}_3, w''^{\downarrow}_2+w''^{\downarrow}_3, w''^{\downarrow}_3+w''^{\downarrow}_3) = (0, w''^{\downarrow}_2, 0, w''^{\downarrow}_2, 0)\] respectively and $x_d\leq 1$ for all $d$, so the sum $\sum_{d=-3}^{1} v_d(w''^{\downarrow})x_d$ is bounded by $2w''^{\downarrow}_2 = 4$. Combining the two gives $I(w''^{\downarrow})$, as desired. 
    
    The proof of $I(w'^{\uparrow})$ is similar, so working backwards we can prove $I(w')$ and $I(w)$.
\end{example}

\begin{figure}[h!]
    \centering
    \begin{subfigure}[t]{0.3\textwidth}
        \centering
        \scalebox{0.6}{
        \begin{tikzpicture}
            \readlist\w{8,5,5,1,9,2};
            \readlist\v{16,13,10,9,6,6,2,7,3,11,4};
            \draw[step=1] (0,0) grid (6,6);
            \foreach \x in {0,1,2,3,4,5} {
                \node[] at (-0.5, 5.5-\x) {\w[\x+1]};
                \node[] at (5.5-\x, 6.5) {\w[\x+1]};
            }
            \foreach \x in {1,2,3,4,5,6} {
                \foreach \y in {1,2,3,4,5,6} {
                    \node[] at (6.5-\x, 6.5-\y) {\pgfmathparse{\w[\x]+\w[\y]} \pgfmathprintnumber{\pgfmathresult}};
                }
            }
            \foreach \x in {0,1,2,3,4,5} {
                \draw[gray, dashed] (5-\x, 6) -- (6, 5-\x);
                \draw[gray, dashed] (0, 5-\x) -- (5-\x, 0);
            }
            \foreach \x in {1,2,3,4,5,6} {
                \node[anchor=north west] at (6,6-\x) {\textcolor{blue}{\v[\x]}};
            }
            \foreach \x in {1,2,3,4,5} {
                \node[anchor=north west] at (6-\x,0) {\textcolor{blue}{\v[6+\x]}};
            }
        \end{tikzpicture}
        }
    \end{subfigure}
    \raisebox{2cm}{$\ =\ $}
    \begin{subfigure}[t]{0.3\textwidth}
        \centering
        \scalebox{0.6}{
        \begin{tikzpicture}
            \readlist\w{1,1,1,1,1,1};
            \readlist\v{2,2,2,2,2,2,2,2,2,2,2};
            \draw[step=1] (0,0) grid (6,6);
            \foreach \x in {0,1,2,3,4,5} {
                \node[] at (-0.5, 5.5-\x) {\w[\x+1]};
                \node[] at (5.5-\x, 6.5) {\w[\x+1]};
            }
            \foreach \x in {1,2,3,4,5,6} {
                \foreach \y in {1,2,3,4,5,6} {
                    \node[] at (6.5-\x, 6.5-\y) {\pgfmathparse{\w[\x]+\w[\y]} \pgfmathprintnumber{\pgfmathresult}};
                }
            }
            \foreach \x in {0,1,2,3,4,5} {
                \draw[gray, dashed] (5-\x, 6) -- (6, 5-\x);
                \draw[gray, dashed] (0, 5-\x) -- (5-\x, 0);
            }
            \foreach \x in {1,2,3,4,5,6} {
                \node[anchor=north west] at (6,6-\x) {\textcolor{blue}{\v[\x]}};
            }
            \foreach \x in {1,2,3,4,5} {
                \node[anchor=north west] at (6-\x,0) {\textcolor{blue}{\v[6+\x]}};
            }
        \end{tikzpicture}
        }
    \end{subfigure}
    \raisebox{2cm}{$\ +\ $}
    \begin{subfigure}[t]{0.3\textwidth}
        \centering
        \scalebox{0.6}{
        \begin{tikzpicture}
            \readlist\w{7,4,4,0,8,1};
            \readlist\v{14,11,8,7,4,4,0,5,1,9,2};
            \draw[step=1] (0,0) grid (6,6);
            \foreach \x in {0,1,2,3,4,5} {
                \node[] at (-0.5, 5.5-\x) {\w[\x+1]};
                \node[] at (5.5-\x, 6.5) {\w[\x+1]};
            }
            \foreach \x in {1,2,3,4,5,6} {
                \foreach \y in {1,2,3,4,5,6} {
                    \node[] at (6.5-\x, 6.5-\y) {\pgfmathparse{\w[\x]+\w[\y]} \pgfmathprintnumber{\pgfmathresult}};
                }
            }
            \foreach \x in {0,1,2,3,4,5} {
                \draw[gray, dashed] (5-\x, 6) -- (6, 5-\x);
                \draw[gray, dashed] (0, 5-\x) -- (5-\x, 0);
            }
            \foreach \x in {1,2,3,4,5,6} {
                \node[anchor=north west] at (6,6-\x) {\textcolor{blue}{\v[\x]}};
            }
            \foreach \x in {1,2,3,4,5} {
                \node[anchor=north west] at (6-\x,0) {\textcolor{blue}{\v[6+\x]}};
            }
        \end{tikzpicture}
        }
    \end{subfigure} 
    \\ \vspace{0.9cm}
    \begin{subfigure}[t]{0.3\textwidth}
        \centering
        \scalebox{0.6}{
        \begin{tikzpicture}
            \readlist\w{7,4,4,0,8,1};
            \readlist\v{14,11,8,7,4,4,0,5,1,9,2};
            \draw[step=1] (0,0) grid (6,6);
            \foreach \x in {0,1,2,3,4,5} {
                \node[] at (-0.5, 5.5-\x) {\w[\x+1]};
                \node[] at (5.5-\x, 6.5) {\w[\x+1]};
            }
            \foreach \x in {1,2,3,4,5,6} {
                \foreach \y in {1,2,3,4,5,6} {
                    \node[] at (6.5-\x, 6.5-\y) {\pgfmathparse{\w[\x]+\w[\y]} \pgfmathprintnumber{\pgfmathresult}};
                }
            }
            \foreach \x in {0,1,2,3,4,5} {
                \draw[gray, dashed] (5-\x, 6) -- (6, 5-\x);
                \draw[gray, dashed] (0, 5-\x) -- (5-\x, 0);
            }
            \foreach \x in {1,2,3,4,5,6} {
                \node[anchor=north west] at (6,6-\x) {\textcolor{blue}{\v[\x]}};
            }
            \foreach \x in {1,2,3,4,5} {
                \node[anchor=north west] at (6-\x,0) {\textcolor{blue}{\v[6+\x]}};
            }
        \end{tikzpicture}
        }
    \end{subfigure} 
    \raisebox{2cm}{$\ \leq\ $}
    \begin{subfigure}[t]{0.3\textwidth}
        \centering
        \scalebox{0.6}{
        \begin{tikzpicture}
            \readlist\w{7,4,4,0,0,0};
            \readlist\v{14,11,8,7,4,4,0,0,0,0,0};
            \draw[step=1] (0,0) grid (6,6);
            \foreach \x in {0,1,2,3} {
                \node[] at (-0.5, 5.5-\x) {\w[\x+1]};
                \node[] at (5.5-\x, 6.5) {\w[\x+1]};
            }
            \foreach \x in {1,2,3,4} {
                \foreach \y in {1,2,3,4} {
                    \node[] at (6.5-\x, 6.5-\y) {\pgfmathparse{\w[\x]+\w[\y]} \pgfmathprintnumber{\pgfmathresult}};
                }
            }
            \foreach \x in {0,1,2,3,4,5} {
                \draw[gray, dashed] (5-\x, 6) -- (6, 5-\x);
                \draw[gray, dashed] (0, 5-\x) -- (5-\x, 0);
            }
            \foreach \x in {1,2,3,4,5,6} {
                \node[anchor=north west] at (6,6-\x) {\textcolor{blue}{\v[\x]}};
            }
            \foreach \x in {1,2,3,4,5} {
                \node[anchor=north west] at (6-\x,0) {\textcolor{blue}{\v[6+\x]}};
            }
        \end{tikzpicture}
        }
    \end{subfigure}
    \raisebox{2cm}{$\ +\ $}
    \begin{subfigure}[t]{0.3\textwidth}
        \centering
        \scalebox{0.6}{
        \begin{tikzpicture}
            \readlist\w{0,0,0,0,8,1};
            \readlist\v{0,0,0,0,0,0,0,8,1,9,2};
            \draw[step=1] (0,0) grid (6,6);
            \foreach \x in {3,4,5} {
                \node[] at (-0.5, 5.5-\x) {\w[\x+1]};
                \node[] at (5.5-\x, 6.5) {\w[\x+1]};
            }
            \foreach \x in {4,5,6} {
                \foreach \y in {4,5,6} {
                    \node[] at (6.5-\x, 6.5-\y) {\pgfmathparse{\w[\x]+\w[\y]} \pgfmathprintnumber{\pgfmathresult}};
                }
            }
            \foreach \x in {0,1,2,3,4,5} {
                \draw[gray, dashed] (5-\x, 6) -- (6, 5-\x);
                \draw[gray, dashed] (0, 5-\x) -- (5-\x, 0);
            }
            \foreach \x in {1,2,3,4,5,6} {
                \node[anchor=north west] at (6,6-\x) {\textcolor{blue}{\v[\x]}};
            }
            \foreach \x in {1,2,3,4,5} {
                \node[anchor=north west] at (6-\x,0) {\textcolor{blue}{\v[6+\x]}};
            }
        \end{tikzpicture}
        }
    \end{subfigure} 
    \\ \vspace{0.9cm}
    \begin{subfigure}[t]{0.3\textwidth}
        \centering
        \scalebox{0.6}{
        \begin{tikzpicture}
            \readlist\w{7,4,4,0};
            \readlist\v{14,11,8,7,4,4,0};
            \draw[step=1] (0,0) grid (4,4);
            \foreach \x in {0,1,2,3} {
                \node[] at (-0.5, 3.5-\x) {\w[\x+1]};
                \node[] at (3.5-\x, 4.5) {\w[\x+1]};
            }
            \foreach \x in {1,2,3,4} {
                \foreach \y in {1,2,3,4} {
                    \node[] at (4.5-\x, 4.5-\y) {\pgfmathparse{\w[\x]+\w[\y]} \pgfmathprintnumber{\pgfmathresult}};
                }
            }
            \foreach \x in {0,1,2,3} {
                \draw[gray, dashed] (3-\x, 4) -- (4, 3-\x);
                \draw[gray, dashed] (0, 3-\x) -- (3-\x, 0);
            }
            \foreach \x in {1,2,3,4} {
                \node[anchor=north west] at (4,4-\x) {\textcolor{blue}{\v[\x]}};
            }
            \foreach \x in {1,2,3} {
                \node[anchor=north west] at (4-\x,0) {\textcolor{blue}{\v[4+\x]}};
            }
        \end{tikzpicture}
        }
    \end{subfigure}
    \raisebox{1.5cm}{$\ =\ $}
    \begin{subfigure}[t]{0.3\textwidth}
        \centering
        \scalebox{0.6}{
        \begin{tikzpicture}
            \readlist\w{6,4,2,0};
            \readlist\v{12,10,8,6,4,2,0};
            \draw[step=1] (0,0) grid (4,4);
            \foreach \x in {0,1,2,3} {
                \node[] at (-0.5, 3.5-\x) {\w[\x+1]};
                \node[] at (3.5-\x, 4.5) {\w[\x+1]};
            }
            \foreach \x in {1,2,3,4} {
                \foreach \y in {1,2,3,4} {
                    \node[] at (4.5-\x, 4.5-\y) {\pgfmathparse{\w[\x]+\w[\y]} \pgfmathprintnumber{\pgfmathresult}};
                }
            }
            \foreach \x in {0,1,2,3} {
                \draw[gray, dashed] (3-\x, 4) -- (4, 3-\x);
                \draw[gray, dashed] (0, 3-\x) -- (3-\x, 0);
            }
            \foreach \x in {1,2,3,4} {
                \node[anchor=north west] at (4,4-\x) {\textcolor{blue}{\v[\x]}};
            }
            \foreach \x in {1,2,3} {
                \node[anchor=north west] at (4-\x,0) {\textcolor{blue}{\v[4+\x]}};
            }
        \end{tikzpicture}
        }
    \end{subfigure}
    \raisebox{1.5cm}{$\ +\ $}
    \begin{subfigure}[t]{0.3\textwidth}
        \centering
        \scalebox{0.6}{
        \begin{tikzpicture}
            \readlist\w{1,0,2,0};
            \readlist\v{2,1,0,1,0,2,0};
            \draw[step=1] (0,0) grid (4,4);
            \foreach \x in {0,1,2,3} {
                \node[] at (-0.5, 3.5-\x) {\w[\x+1]};
                \node[] at (3.5-\x, 4.5) {\w[\x+1]};
            }
            \foreach \x in {1,2,3,4} {
                \foreach \y in {1,2,3,4} {
                    \node[] at (4.5-\x, 4.5-\y) {\pgfmathparse{\w[\x]+\w[\y]} \pgfmathprintnumber{\pgfmathresult}};
                }
            }
            \foreach \x in {0,1,2,3} {
                \draw[gray, dashed] (3-\x, 4) -- (4, 3-\x);
                \draw[gray, dashed] (0, 3-\x) -- (3-\x, 0);
            }
            \foreach \x in {1,2,3,4} {
                \node[anchor=north west] at (4,4-\x) {\textcolor{blue}{\v[\x]}};
            }
            \foreach \x in {1,2,3} {
                \node[anchor=north west] at (4-\x,0) {\textcolor{blue}{\v[4+\x]}};
            }
        \end{tikzpicture}
        }
    \end{subfigure}
    \\ \vspace{0.9cm}
    \begin{subfigure}[t]{0.3\textwidth}
        \centering
        \scalebox{0.6}{
        \begin{tikzpicture}
            \readlist\w{1,0,2,0};
            \readlist\v{2,1,0,1,0,2,0};
            \draw[step=1] (0,0) grid (4,4);
            \foreach \x in {0,1,2,3} {
                \node[] at (-0.5, 3.5-\x) {\w[\x+1]};
                \node[] at (3.5-\x, 4.5) {\w[\x+1]};
            }
            \foreach \x in {1,2,3,4} {
                \foreach \y in {1,2,3,4} {
                    \node[] at (4.5-\x, 4.5-\y) {\pgfmathparse{\w[\x]+\w[\y]} \pgfmathprintnumber{\pgfmathresult}};
                }
            }
            \foreach \x in {0,1,2,3} {
                \draw[gray, dashed] (3-\x, 4) -- (4, 3-\x);
                \draw[gray, dashed] (0, 3-\x) -- (3-\x, 0);
            }
            \foreach \x in {1,2,3,4} {
                \node[anchor=north west] at (4,4-\x) {\textcolor{blue}{\v[\x]}};
            }
            \foreach \x in {1,2,3} {
                \node[anchor=north west] at (4-\x,0) {\textcolor{blue}{\v[4+\x]}};
            }
        \end{tikzpicture}
        }
    \end{subfigure}
    \raisebox{1.5cm}{$\ \leq\ $}
    \begin{subfigure}[t]{0.3\textwidth}
        \centering
        \scalebox{0.6}{
        \begin{tikzpicture}
            \readlist\w{1,0,0,0};
            \readlist\v{2,1,0,0,0,0,0};
            \draw[step=1] (0,0) grid (4,4);
            \foreach \x in {0,1} {
                \node[] at (-0.5, 3.5-\x) {\w[\x+1]};
                \node[] at (3.5-\x, 4.5) {\w[\x+1]};
            }
            \foreach \x in {1,2} {
                \foreach \y in {1,2} {
                    \node[] at (4.5-\x, 4.5-\y) {\pgfmathparse{\w[\x]+\w[\y]} \pgfmathprintnumber{\pgfmathresult}};
                }
            }
            \foreach \x in {0,1,2,3} {
                \draw[gray, dashed] (3-\x, 4) -- (4, 3-\x);
                \draw[gray, dashed] (0, 3-\x) -- (3-\x, 0);
            }
            \foreach \x in {1,2,3,4} {
                \node[anchor=north west] at (4,4-\x) {\textcolor{blue}{\v[\x]}};
            }
            \foreach \x in {1,2,3} {
                \node[anchor=north west] at (4-\x,0) {\textcolor{blue}{\v[4+\x]}};
            }
        \end{tikzpicture}
        }
    \end{subfigure}
    \raisebox{1.5cm}{$\ +\ $}
    \begin{subfigure}[t]{0.3\textwidth}
        \centering
        \scalebox{0.6}{
        \begin{tikzpicture}
            \readlist\w{0,0,2,0};
            \readlist\v{0,0,0,2,0,2,0};
            \draw[step=1] (0,0) grid (4,4);
            \foreach \x in {1,2,3} {
                \node[] at (-0.5, 3.5-\x) {\w[\x+1]};
                \node[] at (3.5-\x, 4.5) {\w[\x+1]};
            }
            \foreach \x in {2,3,4} {
                \node[] at (4.5-\x, 0.5) {\pgfmathparse{\w[\x]} \pgfmathprintnumber{\pgfmathresult}};
            }
            \foreach \y in {2,3} {
                \node[] at (2.5, 4.5-\y) {\pgfmathparse{\w[\y]} \pgfmathprintnumber{\pgfmathresult}};
            }
            \foreach \x in {0,1,2,3} {
                \draw[gray, dashed] (3-\x, 4) -- (4, 3-\x);
                \draw[gray, dashed] (0, 3-\x) -- (3-\x, 0);
            }
            \foreach \x in {1,2,3,4} {
                \node[anchor=north west] at (4,4-\x) {\textcolor{blue}{\v[\x]}};
            }
            \foreach \x in {1,2,3} {
                \node[anchor=north west] at (4-\x,0) {\textcolor{blue}{\v[4+\x]}};
            }
        \end{tikzpicture}
        }
    \end{subfigure}
    \caption{Four key reductions used in \cref{proof-example}, following the same notation as \cref{fig:compute-v} to represent each $I(w)$.}\label{fig:proof-example}
\end{figure}

Combining \cref{prop:w-conditions} and \cref{prop:two-sided} gives the following:

\begin{corollary}
    Let $x\in \mathbb{R}^{2n-1}$ be a vector such that $\sum_{d=-n+1}^{n-1} x_d = n$ and $0\leq x_d\leq 1$ for every $d$. It is the X-ray of a doubly stochastic matrix if and only if it satisfies inequality $D_k$ and $U_k$ for every $1\leq k\leq n-1$.
\end{corollary}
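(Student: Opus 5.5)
The corollary follows by combining \cref{prop:w-conditions} with \cref{prop:two-sided}, with one direction for each implication.

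For the ``only if'' direction, suppose $x$ is the X-ray of a doubly stochastic matrix. Since $x$ is non-negative and $\sum_d x_d = n$, \cref{prop:w-conditions} says $x$ satisfies $I(w)$ for every non-negative $w$. By \cref{ex:fund_weights}, $D_k$ is exactly $I(w)$ for $w = (k, k-1, \dots, 1, 0, \dots, 0)$, and $U_k$ is $I(w)$ for the reversed vector. Hence $x$ satisfies all $D_k$ and $U_k$ with $1\leq k\leq n-1$.

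The one point to verify here is the indexing for general $n$ and $k$. For $w_i = \max(k-i, 0)$, the minimum defining $v_d(w)$ over $i+j = d+(n-1)$ is attained by the most balanced pair, as long as both indices stay at most $k$. This gives $v_{-n+i}(w) = 2k-i+1$ for $1\leq i\leq 2k$ and $0$ otherwise. The right-hand side is $2\sum_i w_i = k(k+1)$, which matches $D_k$. The $U_k$ case follows by the symmetry $i \mapsto n-1-i$, which sends $d \mapsto -d$.

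For the ``if'' direction, suppose $x$ satisfies $0\leq x_d\leq 1$, $\sum_d x_d = n$, and all $D_k, U_k$ for $1\leq k\leq n-1$. \Cref{prop:two-sided} then gives $I(w)$ for every non-negative $w\in\mathbb{R}^n$; it needs only $x_d\leq 1$ together with the $D_k$ and $U_k$. Since $x$ is also non-negative and sums to $n$, the hypotheses of \cref{prop:w-conditions} hold, so $x$ is the X-ray of a doubly stochastic matrix.

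There is no real obstacle beyond the index check in the first direction. The substantive work was already done in \cref{prop:one-sided} and \cref{prop:two-sided}.
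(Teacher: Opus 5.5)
Your proof is correct and is essentially the paper's argument: the ``if'' direction comes from combining \cref{prop:two-sided} with \cref{prop:w-conditions}, and the ``only if'' direction from the observation in \cref{ex:fund_weights} that each $D_k$ and $U_k$ is an instance of $I(w)$. Your check that $w_i=\max(k-i,0)$ gives $v_{-n+i}(w)=2k-i+1$ for $1\le i\le 2k$ (and $0$ otherwise), with right-hand side $k(k+1)$, supplies for general $n$ and $k$ the index verification that the paper only asserts.
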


\cref{thm:main} is simply a special case of this corollary (where $x$ is an integer vector) due to \cref{prop:x-conditions}.

\begin{remark}\label{rem:sub}
    If we drop the condition that $\sum_{d} x_d = n$ from the statement, our proofs actually show that $D_k$ and $U_k$ are necessary and sufficient for a real-valued ``sub-binary'' vector $x$ to be the X-ray of a doubly sub-stochastic matrix.
\end{remark}

\section{A false saturation-type conjecture} \label{sec:sat_conj}

As noted in \cite{Nordh2017}, \cref{conj:binary} can now be reduced to the following conjecture:

\begin{conjecture}[\cite{Nordh2017}, Conjecture 17]\label{conj:ds-to-perm}
    If a binary vector $x\in \mathbb{R}^{2n-1}$ can be realized as the X-ray of a doubly stochastic matrix, then it can be realized as the X-ray of a permutation matrix.
\end{conjecture}

This conjecture admits a natural generalization:

\begin{definition}
    A \bfemph{triangular contingency table} of size $n$ is an array $A$ of non-negative real numbers $a_{i,j,k}$, where $(i, j, k)$ ranges over triples of non-negative integers with sum $n-1$, along with $3n$ specified \bfemph{row sums} or \bfemph{marginals} $\lambda_i, \mu_j, \nu_k$ for $0\leq i, j, k\leq n-1$, where \[\lambda_i = \sum_{j'+k'=n-1-i} a_{i,j',k'}, \quad \mu_j = \sum_{i'+k'=n-1-j} a_{i',j,k'}, \quad \nu_k = \sum_{i'+j'=n-1-k} a_{i',j',k}.\]
\end{definition}

It is not difficult to see that an $n\times n$ doubly stochastic matrix with specified X-ray $x$ is equivalent to a triangular contingency table of size $2n-1$ where $\lambda_i = x_{i-(n-1)}$ for each $i$, and $\mu_j, \nu_k = 1$ if $j, k\leq n-1$ and $0$ otherwise.

\begin{conjecture}\label{conj:contingency}
    If there is a triangular contingency table with specified integer marginals $\lambda_i, \mu_j, \nu_k$, then there exists such a table consisting of integer entries with the same marginals.
\end{conjecture}

Note that for normal (rectangular) contingency tables, this statement is easy as the matrix defining the row/column sum constraints is totally unimodular. (The set of all such tables with given marginals is called a \bfemph{transportation polytope}, which has been extensively studied; see \cite{LK2013} for an overview.) 
It is in fact possible to turn any contingency table into an integer one by only modifying the non-integer entries. This is not the case even for DS matrices (with a given binary X-ray), as shown by the matrix \[\begin{bmatrix} 1/2 & 1/2 & 0 & 0 & 0 \\ 1/2 & 0 & 0 & 1/2 & 0 \\ 0 & 0 & 1/2 & 0 & 1/2 \\ 0 & 1/2 & 0 & 0 & 1/2 \\ 0 & 0 & 1/2 & 1/2 & 0\end{bmatrix},\] where it is impossible to replace some $1/2$'s with $1$'s and the others with $0$'s while preserving the X-ray $(0,0,1,1,1,1,1,0,0)$. Do note that the the permutation matrix \[\begin{bmatrix} 0 & 1 & 0 & 0 & 0 \\ 1 & 0 & 0 & 0 & 0 \\ 0 & 0 & 0 & 0 & 1 \\ 0 & 0 & 0 & 1 & 0 \\ 0 & 0 & 1 & 0 & 0\end{bmatrix}\] has the same X-ray, so this does not constitute a counter-example to \cref{conj:ds-to-perm}.

Triangular contingency tables are also very similar to \bfemph{Berenstein--Zelevinsky (BZ) patterns}, where the marginal conditions are essentially the same and entry-wise non-negativity is replaced by non-negativity of prefix sums along each row (see \cite{Stanley2001}, Definition A.1.3.10). Notably, the number of integer BZ patterns with given marginals corresponds to Littlewood-Richardson coefficients (\cite{BZ1992}). Due to the celebrated proof of the saturation conjecture by Knutson and Tao (\cite{KT1999}), any real-valued BZ pattern with integer marginals can be turned into an integer pattern. \cref{conj:contingency} can hence be seen as a version of the saturation conjecture restricted to the subset of ``totally non-negative'' BZ patterns. We also note that this subset trivially respects $S_3$ symmetry of the marginals, whereas BZ patterns only have $C_3$ symmetry.

Unfortunately, the problem of determining if an integer triangular contingency table with given marginals exists is NP-complete, even when the marginals are restricted to be binary (\cite{BDGV2008}, Theorem 1). This means that one generally does not expect an easy criterion similar to the inequalities $L_k$ and $U_k$. Moreover, following the same reduction techniques in the proof of NP-completeness provides an explicit counter-example to \cref{conj:contingency}.

\begin{proposition}
    There exist binary marginals $\lambda_i, \mu_j, \nu_k \in \{0, 1\}$ for which there is a unique triangular contingency table with these marginals, and this table is not integer-valued.
\end{proposition}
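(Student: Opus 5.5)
We will build the example by gluing together small ``gadgets''. This follows the reduction in \cite{BDGV2008}, Theorem 1, but we choose the pieces so that the feasible set of real tables collapses to a single point. View a triangular contingency table of size $N$ as a non-negative weighting of the lattice points $(i,j,k)$ with $i+j+k=N-1$. Each line $i=\text{const}$, $j=\text{const}$ or $k=\text{const}$ must carry total weight $\lambda_i$, $\mu_j$ or $\nu_k$. Binary marginals mean that each line is either \emph{forbidden} (marginal $0$, so every point on it has weight $0$) or \emph{active} (marginal $1$). The \emph{support} is the set of points lying on no forbidden line. The problem is then purely combinatorial: choose a set of active lines in each of the three directions so that the real solution set is a single fractional point.

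First I would find a small support whose equations force half-integral weights, modeled on an odd cycle. Suppose the support contains three points $p_1,p_2,p_3$. Suppose also that each pair $\{p_a,p_b\}$ is exactly the set of support points on one active line, using lines in different directions. Then $a_1+a_2=a_2+a_3=a_3+a_1=1$, which forces $a_1=a_2=a_3=\tfrac12$.

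To realize this configuration, I would try points such as $(0,1,2)$, $(1,2,0)$, $(2,0,1)$ in size $N=4$ and look for lines through pairs of them. This first attempt fails: two points sharing a line must share a coordinate, and these three points share none. So I would instead search for a configuration in which every active line has weight exactly $1$ and the incidence structure of the support is an odd cycle, with any extra support points forced to $0$ or $1$ by other lines.

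Concretely, I would run a small computer search over sizes $N\le 7$ or so, enumerating the choices of active line sets. For each choice, solve the linear system, check that the solution is unique (the affine solution space has dimension $0$ on the support), and check that it is non-negative and non-integral. Uniqueness of the real solution is the stronger requirement, since it automatically rules out every integer table. Once a candidate is found, the verification is finite. We present the table explicitly, for example as a triangle of entries $\tfrac12$ and $0$ along with the marginals. We then show by hand that the support equations have full column rank, for instance by exhibiting an elimination order: lines containing a single support point force that entry, and the remaining entries form the odd cycle.

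The main obstacle is the design step: arranging the active lines so that the support's incidence matrix is square and nonsingular, with an odd cycle that forces $\tfrac12$. An arbitrary choice tends to give either an infeasible system or a positive-dimensional solution set. If a direct search is too slow, I would instead imitate the NP-hardness gadgets of \cite{BDGV2008} and encode an unsatisfiable instance whose LP relaxation is tight, such as an odd cycle of ``$x_a+x_b=1$'' constraints. The gadgets would then be placed in disjoint regions of the triangle connected by forced lines, and the final step is to check that the composite support has no extra freedom.
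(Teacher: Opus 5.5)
Your proposal is a search plan, not a proof, and the gap is the whole content of the proposition. The statement is a pure existence claim, but you never produce marginals or show that suitable ones exist. The success of a search over $N\le 7$ is simply assumed, and the fallback via the gadgets of \cite{BDGV2008} is never carried out. The paper's own example lives in size $n=112$, and nothing in your proposal suggests a size-$7$ example exists.

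More seriously, the configuration you plan to search for cannot exist. With binary marginals, a point with $0<a_p<1$ lies on three lines whose marginals are positive, hence equal to $1$. On \emph{each} of these lines the other entries sum to $1-a_p\notin\mathbb{Z}$, so each line contains another fractional entry. Consider an odd cycle of $\tfrac12$'s in which each point uses only two of its three lines, with everything else forced to $0$ or $1$. This is impossible, because the third line through each cycle point needs further fractional mass. For the same reason, your verification template (singleton lines force their entries, and what remains is the odd cycle) can never apply.

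The fractional part must instead be a pattern in which every fractional point has fractional partners in all three directions. In the half-integral case this means a properly $3$-edge-colored cubic graph with no bipartite component. It must also be placed in the triangle so that no other lattice point lies on three active lines, and that placement is the real difficulty. For instance, a direct computation shows that the two smallest such patterns, with graph $K_4$ or a triangular prism, cannot be placed in the triangular grid at all.

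The missing idea is to import a known fractional gadget and embed it with well-separated lines. The paper starts from the $3\times4\times6$ three-way table $B$ of \cite{LK2013}, Example 3.4. Its $2$-margins $\alpha,\beta,\gamma$ are binary, and it has a unique real solution, whose nonzero entries all equal $\tfrac12$. In size $n=112$ it sets $\lambda_{4i+j}=\alpha_{i,j}$, $\mu_{8+20k-4i}=\beta_{i,k}$, $\nu_{103-20k-j}=\gamma_{j,k}$, and all other marginals $0$; the three indices always sum to $111$.

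The three index maps are injective, so distinct lines of $B$ become distinct lines of the triangle. Suppose a lattice point lies on three active lines coming from $(i,j)$, $(i',k')$ and $(j'',k'')$. Then $4(i-i')+(j-j'')+20(k'-k'')=0$. Since $|j-j''|\le 3<4$ and $|4(i-i')+(j-j'')|\le 11<20$, this forces $i=i'$, $j=j''$ and $k'=k''$. Hence the admissible entries of $A$ are exactly the images of the entries of $B$, and tables $A$ correspond bijectively to tables $B$. Uniqueness and non-integrality therefore carry over from $B$ to $A$.
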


\begin{proof}
    We begin with a \bfemph{3-way contigency table} $B$, which is a 3-dimensional array of non-negative real numbers $b_{i,j,k}$ where $i\in \{0, 1, 2\}, j\in \{0, 1, 2, 3\}, k\in \{0, 1, 2, 3, 4, 5\}$, with (2-way) marginals \[\alpha_{i,j} = \sum_{k'} b_{i,j,k'}, \quad \beta_{i,k} = \sum_{j'} b_{i,j',k}, \quad \gamma_{j,k} = \sum_{i'} b_{i',j,k},\] specified by the following tables:

    \begin{table}[h!]
    \begin{tabular}{cc|cccc}
     & $j$ & 0 & 1 & 2 & 3 \\
    $i$ & $\alpha_{i,j}$ &  &  &  &  \\ \hline
    0 &  & 1 & 1 & 1 & 1 \\
    1 &  & 1 & 1 & 1 & 1 \\
    2 &  & 1 & 1 & 1 & 1
    \end{tabular} \qquad
    \begin{tabular}{cc|cccccc}
     & $k$ & 0 & 1 & 2 & 3 & 4 & 5 \\
    $i$ & $\beta_{i,k}$ &  &  &  &  &  &  \\ \hline
    0 &  & 1 & 1 & 1 & 1 & 0 & 0 \\
    1 &  & 1 & 1 & 0 & 0 & 1 & 1 \\
    2 &  & 0 & 0 & 1 & 1 & 1 & 1
    \end{tabular} \qquad
    \begin{tabular}{cc|cccccc}
     & $k$ & 0 & 1 & 2 & 3 & 4 & 5 \\
    $j$ & $\gamma_{j,k}$ &  &  &  &  &  &  \\ \hline
    0 &  & 1 & 0 & 1 & 0 & 1 & 0 \\
    1 &  & 1 & 0 & 0 & 1 & 0 & 1 \\
    2 &  & 0 & 1 & 1 & 0 & 0 & 1 \\
    3 &  & 0 & 1 & 0 & 1 & 1 & 0
    \end{tabular}
    \end{table}

    From \cite{LK2013}, Example 3.4, there exists a unique $B$ with these marginals, where $b_{i, j, k} = 1/2$ if $\alpha_{i,j}=\beta_{i,k}=\gamma_{j,k}=1$, and $0$ otherwise.

    We now construct the following marginals for a triangular contingency table $A$ of size $n = 112$: \[\lambda_{4i+j}=\alpha_{i,j},\quad \mu_{8+20k-4i}=\beta_{i,k},\quad \nu_{103-20k-j}=\gamma_{j,k},\] and all other marginals not defined above are zero. It is not difficult to see that any entry in $A$ that is not of the form $a_{4i+j, 8+20k-4i, 103-20k-j}$ must be zero because it is on a row with sum zero (in some direction), and the entries that are of this form are in bijection with the entries $b_{i,j,k}$ in $B$, and must take the same value as $b_{i,j,k}$ to satisfy the marginals. (In essence, we projected the three-dimensional array $B$ onto a two-dimensional plane so that the rows of $B$ do not create extraneous three-way intersections after projection.) Consequently, the table $A$ is also unique and not integer-valued, as desired.
\end{proof}

Note that in the marginals we constructed in the above proof, there is $\lambda_{0} = \lambda_1 = \dots = \lambda_{11} = 1$; i.e.\ $\lambda$ has contiguous support. A counter-example where \emph{two} of the three sets of marginals have contiguous support is equivalent to a counter-example to \cref{conj:ds-to-perm}. This also means that any potential proof of \cref{conj:ds-to-perm} must strongly make use of the fact that the row and column sums are exactly $1$, unlike the proof of \cref{thm:main} (see \cref{rem:sub}). 

Similarly, the NP-completeness of the original \cref{question} suggests that the word ``binary'' from \cref{conj:ds-to-perm} cannot be dropped either. An explicit counter-example to the strengthened conjecture can be found via computer search with the help of \cref{prop:rays}: When $n = 12$, the DS matrix
\[
\begin{bmatrix}
    0 & 0 & 0 & 1/2 & 0 & 0 & 0 & 1/2 & 0 & 0 & 0 & 0 \\
    1/4 & 0 & 0 & 0 & 1/4 & 0 & 0 & 0 & 0 & 1/2 & 0 & 0 \\
    0 & 0 & 0 & 0 & 0 & 1/2 & 0 & 0 & 0 & 0 & 1/2 & 0 \\
    0 & 0 & 0 & 0 & 0 & 0 & 1 & 0 & 0 & 0 & 0 & 0 \\
    0 & 0 & 0 & 0 & 0 & 0 & 0 & 1/2 & 0 & 0 & 0 & 1/2 \\
    0 & 0 & 0 & 0 & 0 & 0 & 0 & 0 & 1 & 0 & 0 & 0 \\
    0 & 0 & 0 & 0 & 0 & 1/2 & 0 & 0 & 0 & 1/2 & 0 & 0 \\
    3/4 & 0 & 0 & 0 & 0 & 0 & 0 & 0 & 0 & 0 & 1/4 & 0 \\
    0 & 1/2 & 0 & 0 & 0 & 0 & 0 & 0 & 0 & 0 & 0 & 1/2 \\
    0 & 1/2 & 1/2 & 0 & 0 & 0 & 0 & 0 & 0 & 0 & 0 & 0 \\
    0 & 0 & 1/2 & 1/2 & 0 & 0 & 0 & 0 & 0 & 0 & 0 & 0 \\
    0 & 0 & 0 & 0 & 3/4 & 0 & 0 & 0 & 0 & 0 & 1/4 & 0 
\end{bmatrix}
\]
has X-ray $(0, 0, 0, 1, 1, 0, 0, 0, 5, 0, 0, 0, 1, 0, 0, 0, 0, 0, 3, 1, 0, 0, 0)$, which cannot be realized by any permutation matrix.

Nevertheless, it may still be worth attempting to imitate the proof of the saturation conjecture to obtain partial results in this direction, which may be helpful towards resolving the weaker (and still open) \cref{conj:ds-to-perm}. It may also be of independent interest to study these totally non-negative BZ patterns and find more combinatorial interpretations for the numbers of such patterns, similar to Littlewood-Richardson coefficients.

\bigskip

\paragraph{\bfseries Acknowledgments.} The authors would like to thank Alexander Postnikov and Ilani Axelrod-Freed for introducing X-rays of permutations to them. The authors would also like to thank the MIT Math Department for sponsoring this project as part of UROP(+) in summer and fall of 2025.

\bibliographystyle{alpha}
\bibliography{refs}

@article{BF2014,
title = {Hankel and Toeplitz X-rays of permutations},
journal = {Linear Algebra and its Applications},
volume = {449},
pages = {350-380},
year = {2014},
doi = {https://doi.org/10.1016/j.laa.2014.02.037},
url = {https://www.sciencedirect.com/science/article/pii/S0024379514001049},
author = {Richard A. Brualdi and Eliseu Fritscher}
}

@article{BMPS2005,
title = {On the X-rays of permutations},
journal = {Electronic Notes in Discrete Mathematics},
volume = {20},
pages = {193-203},
year = {2005},
note = {Proceedings of the Workshop on Discrete Tomography and its Applications},
doi = {https://doi.org/10.1016/j.endm.2005.05.063},
url = {https://www.sciencedirect.com/science/article/pii/S1571065305050675},
author = {Cecilia Bebeacua and Toufik Mansour and Alex Postnikov and Simone Severini}
}

@article{BDGV2008,
title = {On the reconstruction of binary and permutation matrices under (binary) tomographic constraints},
journal = {Theoretical Computer Science},
volume = {406},
number = {1},
pages = {63-71},
year = {2008},
note = {Discrete Tomography and Digital Geometry: In memory of Attila Kuba},
doi = {https://doi.org/10.1016/j.tcs.2008.06.014},
url = {https://www.sciencedirect.com/science/article/pii/S0304397508004131},
author = {S. Brunetti and A. {Del Lungo} and P. Gritzmann and S. {de Vries}},
}

@misc{Nordh2017,
      title={A note on X-rays of permutations and a problem of Brualdi and Fritscher}, 
      author={Gustav Nordh},
      year={2017},
      eprint={1707.03928},
      archivePrefix={arXiv},
      primaryClass={math.CO},
      url={https://arxiv.org/abs/1707.03928}, 
}

@article{LK2013,
    title = {Combinatorics and Geometry of Transportation Polytopes: An Update},
    author={Jes\'{u}s A. De Loera and Edward D. Kim},
    journal = {Comtemporary Math.},
    year = {2013},
    month = {06},
    volume = {625},
    doi = {10.1090/conm/625/12491}
}

@Article{BZ1992,
author={Berenstein, A. D.
and Zelevinsky, A. V.},
title={Triple Multiplicities for sl(r + 1) and the Spectrum of the Exterior Algebra of the Adjoint Representation},
journal={Journal of Algebraic Combinatorics},
year={1992},
month={05},
day={01},
volume={1},
number={1},
pages={7-22},
doi={10.1023/A:1022429213282},
url={https://doi.org/10.1023/A:1022429213282}
}

@book{Stanley2001, 
place={Cambridge}, 
series={Cambridge Studies in Advanced Mathematics}, 
title={Enumerative Combinatorics: Volume 2}, 
publisher={Cambridge University Press}, 
author={Stanley, Richard P}, 
year={2001}, 
collection={Cambridge Studies in Advanced Mathematics}}

@article{KT1999,
    author = "Knutson, Allen and Tao, Terence",
    title = "{The honeycomb model of $GL_n(\mathbb{C})$ tensor products I: Proof of the saturation conjecture}",
    doi = "10.1090/s0894-0347-99-00299-4",
    journal = "J. Am. Math. Soc.",
    volume = "12",
    number = "04",
    pages = "1055--1091",
    year = "1999"
}

@book{HK1999,
author = {Herman, Gabor T. and Kuba, Attila.},
address = {Boston},
booktitle = {Discrete tomography: foundations, algorithms, and applications},
language = {eng},
lccn = {99020171},
publisher = {Birkh\"{a}user},
title = {Discrete tomography : foundations, algorithms, and applications },
year = {1999},
}

@book{CD2006,
author = {Colbourn, Charles J. and Dinitz, Jeffrey H.},
title = {Handbook of Combinatorial Designs, Second Edition (Discrete Mathematics and Its Applications)},
year = {2006},
publisher = {Chapman \& Hall/CRC}
}

@article{PRV1967,
 URL = {http://www.jstor.org/stable/1970351},
 author = {K. R. Parthasarathy and R. Ranga Rao and V. S. Varadarajan},
 journal = {Annals of Mathematics},
 number = {3},
 pages = {383--429},
 title = {Representations of Complex Semi-Simple Lie Groups and Lie Algebras},
 volume = {85},
 year = {1967}
}

\end{document}